\newcommand{\C}{{\mathbb C}}
\newcommand{\ch}{\operatorname{ch}}
\newcommand{\dvol}{\operatorname{dvol}}
\newcommand{\End}{\operatorname{End}}
\newcommand{\GL}{\operatorname{GL}}
\newcommand{\HH}{\operatorname{H}}
\newcommand{\Ker}{\operatorname{Ker}}
\newcommand{\R}{{\mathbb R}}
\newcommand{\tr}{\operatorname{tr}}
\newcommand{\Td}{\operatorname{Td}}
\newcommand{\Tr}{\operatorname{Tr}}
\newcommand{\Z}{{\mathbb Z}}
\numberwithin{equation}{section}
\theoremstyle{plain}
\newtheorem{definition}[equation]{Definition}
\newtheorem{theorem}[equation]{Theorem}
\theoremstyle{remark}
\begin{document}
\title{The Ray-Singer Torsion}

\author{John Lott}
\address{Department of Mathematics\\
University of California, Berkeley\\
Berkeley, CA  94720-3840\\
USA} \email{lott@berkeley.edu}

\thanks{}
\begin{abstract}
In 1971, Ray and Singer proposed an 
analytic equivalent of a classical 
topological invariant, the $R$-torsion.
This Ray-Singer torsion has had many
ramifications in mathematics and physics.
I will describe the background, the Ray-Singer papers and some
subsequent work.
\end{abstract}
\maketitle

\tableofcontents

\noindent

\section{Introduction} \label{sect1}

As a bridge between two worlds, it's always interesting to find analytic equivalents for topological invariants.  An outstanding example is the Hodge theorem, relating the real cohomology of a smooth compact manifold to the harmonic differential forms, after equipping the manifold with a Riemannian metric.

In a 1971 paper \cite{Ray-Singer (1971)}, Daniel Ray and Isadore Singer of MIT proposed an analytic equivalent of a classical topological invariant called the $R$-torsion.  In so doing, they introduced a remarkable notion of a regularized determinant of a Laplace-type operator.  
The Ray-Singer work turned out to have contact with many areas of mathematics and physics, in ways that would have been hard for Ray and Singer to predict. In a second paper in 1973, they extended their methods to the holomorphic setting \cite{Ray-Singer (1973)}.

After the Ray-Singer papers appeared, it was probably not clear how the Ray-Singer torsion fit into the wider framework
of geometry and topology.  This was clarified in later decades by looking at families
of manifolds, rather than individual manifolds.

In Section \ref{sect2}, I'll review some of the work that lead up to the Ray-Singer papers.  Section \ref{sect3} describes the content of the Ray-Singer papers.  Section \ref{sect4} has selected further developments.
To give a coherent narrative, I haven't tried to be comprehensive.
To restrict the length of this article, most of the work described is from the $20^{th}$ century.

There are excellent expositions of the Ray-Singer work, such as Werner M\"uller's article in the Notices
\cite{Muller (2022)}.  I thank Jeff Cheeger and Dan Freed for their comments on an earlier version of this article.

\section{Topological precedents} \label{sect2}

There were both topological and analytical results that inspired the Ray-Singer work.  Although I was a thesis student of Singer, I never asked him about the motivations for the Ray-Singer papers, so some of what follows are my surmises.

Section \ref{sect2.1} has a description of lens spaces.  Section \ref{sect2.2} has the definition of $R$-torsion
and Section \ref{sect2.3} gives some of its properties.  Some applications of the $R$-torsion are given
in Section \ref{sect2.4}, namely the homeomorphism classification of lens spaces and the disproof of
the Hauptvermutung for simplicial complexes.  Section \ref{sect2.5} mentions the role of Arnold Shapiro.

A reference for the material in this section is
\cite{Milnor (1966)}.

\subsection{Lens spaces} \label{sect2.1}

A basic problem in topology is to understand the homeomorphism types of manifolds.
This is already challenging in three dimensions.
There's a class of three dimensional manifolds called {\it lens spaces} which, although easy to define, have interesting topological properties.

Let $p > q$ be relatively prime positive integers.  Writing
$S^3 = \{(z_1, z_2) \in \C^2 \: : \: |z_1|^2 + |z_2|^2 = 1 \}$,
there's a free $\Z_p$-action on $S^3$ whose generator sends
$(z_1, z_2)$ to $\left( e^{\frac{2 \pi i}{p}} z_1, 
e^{\frac{2 \pi iq}{p}} z_2 \right)$.  The lens space $L(p,q)$ is the quotient of $S^3$ by this $\Z_p$-action. The word ``lens'' comes from a way of picturing a fundamental domain for the action \cite[Example 1.4.6]{Thurston (1997)}. There are analogous higher dimensional lens spaces in which $\Z_p$ acts on an odd dimensional sphere.

When is $L(p,q)$ homeomorphic to $L(p^\prime, q^\prime)$?  Considering fundamental groups, $p$ must be equal to $p^\prime$. One gets some further information from homotopy theory.  However, it turns out that $L(7,1)$ is homotopy equivalent to $L(7,2)$, so standard algebraic topology won't help to decide whether they are homeomorphic.

\subsection{$R$-torsion} \label{sect2.2}
The $R$-torsion was developed by Kurt Reidemeister \cite{Reidemeister (1935)}, Wolfgang Franz \cite{Franz (1935)} and Georges de Rham \cite{de Rham (1936)} to understand when lens spaces
are combinatorially equivalent.
Franz did his Habilitation degree under the supervision of
Reidemeister.\footnote{Reidemeister was removed from his professorship at the University of K\"onigsberg in 1933, as retaliation for his anti-Nazi statements. He learned of his dismissal by reading about it in the local newspaper
\cite{BBF (1972)}.  He got a position at the University of Marburg in 1934, where he remained until 1955. 
During the war Franz worked in the Wehrmacht's codebreaking group. He held a position at the Goethe University Frankfurt between 1946 and 1974.} Reidemeister's work was about three dimensional manifolds and had the aim of classifying the lens spaces up to combinatorial equivalence.  De Rham and Franz extended Reidemeister's work to higher dimension. Franz dealt with arbitrary finite simplicial complexes.   

To establish some notation, let $V$ be a finite dimensional real vector space. If ${\bf v} = (v_1, \ldots v_n)$ and
${\bf w} = (w_1, \ldots w_n)$ are two bases of $V$ then 
we write $[{\bf w}/{\bf v}]$ for $| \det T|$, where $T$ is
the change-of-basis matrix from ${\bf v}$ to ${\bf w}$,
i.e. $w_i = \sum_j T_{ij} v_j$.

Now let $C$ be a finite chain complex 
\begin{equation} \label{2.1}
C_N \stackrel{\partial}{\longrightarrow} C_{N-1}
\stackrel{\partial}{\longrightarrow} \ldots 
C_1 \stackrel{\partial}{\longrightarrow} C_{0}
\end{equation}
of finite dimensional real vector spaces.
As usual, we write $Z_q \subset C_q$ and $B_q \subset C_q$
for the kernel and image of $\partial$, respectively, and 
put $H_q = Z_q/B_q$, so that we have short exact
sequences 
\begin{equation} \label{2.2}
0 \longrightarrow Z_q \longrightarrow C_q \stackrel{\partial}{\longrightarrow} B_{q-1} \longrightarrow 0
\end{equation}
and 
\begin{equation} \label{2.3}
0 \longrightarrow B_q \longrightarrow Z_q \longrightarrow H_q \longrightarrow 0.
\end{equation}

Suppose that ${\bf c}_q$ is a preferred basis for $C_q$ and
${\bf h}_q$ is a preferred basis for $H_q$.   Choose an
auxiliary
basis ${\bf b}_q$ for $B_q$. 

Choose elements $\tilde{\bf b}_{q-1} \subset C_q$ so that $\partial \tilde{\bf b}_{q-1} = {\bf b}_{q-1}$ and choose elements
$\tilde{\bf h}_q \subset Z_q$ that project to ${\bf h}_q$. Then
$({\bf b}_q, \tilde{\bf h}_q, \tilde{\bf b}_{q-1})$ is a basis
for $C_q$. Since $[({\bf b}_q, \tilde{\bf h}_q, \tilde{\bf b}_{q-1})/{\bf c}_q]$ only depends on 
${\bf b}_q$, ${\bf h}_q$, ${\bf b}_{q-1}$ and ${\bf c}_q$, we write it as
$[({\bf b}_q, {\bf h}_q, {\bf b}_{q-1})/{\bf c}_q]$.

\begin{definition} \label{2.4}
The torsion of $C$ is the positive real number $\tau(C)$ given by
\begin{equation} \label{2.5}
\log \tau(C) = \sum_{q=0}^N (-1)^q \log [({\bf b}_q, {\bf h}_q, {\bf b}_{q-1})/{\bf c}_q].
\end{equation}
\end{definition}

It's not hard to see that $\tau(C)$ is independent of the choices of the ${\bf b}_q$ bases. And it just depends on the
${\bf c}_q$ and ${\bf h}_q$ bases through their induced volume forms on $C_q$ and $H_q$, respectively.

If $K$ is a connected finite simplicial complex then its real chain groups have preferred bases and we can talk about the torsion of
the chain complex, provided that we are given a preferred basis of the real homology groups.  In order to compare two different simplicial complexes
using numerical invariants, it turns out to be useful to have {\em acyclic} complexes, i.e. ones with vanishing homology, so that no choice of 
basis is needed.  This can often be achieved by {\em local systems}
on $K$.  For us, a local system can be thought of as a flat Euclidean vector bundle on $K$ or, more algebraically, as arising from a
homomorphism $\rho : \pi_1(K, k_0) \rightarrow O(n)$. Letting $\widetilde{K}$ denote the universal cover of $K$, the twisted chain groups are given by
$C_q(K, \rho) = C_q(\widetilde{K}) \otimes_{\R \pi_1(K,k_0)} \R^n$,
where an element $\sum_g a_g g$ of $\R \pi_1(K,k_0)$ acts on 
$v \in \R^n$ by $(\sum_g a_g g) \cdot v = \sum_g a_g \: \rho(g) v$.

Choosing a fundamental domain of $K$ in $\widetilde{K}$, the
natural basis of $C_q({K})$ and the standard basis of
$\R^n$ combine to give a preferred basis of $C_q(K, \rho)$.

\begin{definition} \label{2.6}
Given the finite simplicial complex $K$ and a representation 
$\rho$ so that $C(K, \rho)$ has vanishing homology,
the $R$-torsion is given by $\tau_K(\rho) = \tau(C(K, \rho))$.
\end{definition}

One can show that $\tau_K(\rho)$ is independent of the choice of fundamental domain in $\widetilde{K}$.  

\subsection{Properties of the $R$-torsion} \label{sect2.3}
The main point of the
construction  of $\tau_K(\rho)$ is that it is
{\em invariant under subdivison}. In particular, if two triangulated manifolds can be 
subdivided to become combinatorially equivalent then they have the same
values for the $R$-torsion. 
It follows that the $R$-torsion is a PL (piecewise linear)
invariant of PL manifolds.  In particular, it is a diffeomorphism invariant of
smooth manifolds. (Some of the early literature doesn't distinguish clearly 
between PL homeomorphism and topological homeomorphism.)

Now $\tau_K$ is not a homotopy invariant of $K$.
However, it is invariant under a more restricted notion of homotopy
equivalence, called {\em simple homotopy equivalence}
\cite{Cohen (1973)}.  

As to whether two homeomorphic complexes have the same torsion, this will be true if the complexes are
necessarily simple homotopy equivalent. Such was proven for
PL manifolds by Kirby and Siebenmann \cite{Kirby-Siebenmann (1977)} and more generally for
CW complexes by Chapman \cite{Chapman (1974)}. Chapman's proof involved a foray into ``manifolds'' locally modelled on Hilbert cubes.

In some ways, the $R$-torsion is a cousin of the Euler characteristic.  For example, the Euler characteristic of an 
odd dimensional closed (= compact boundaryless) manifold vanishes, while the $R$-torsion of
an even dimensional (oriented) closed manifold vanishes.
There is a product formula: If $K$ and $K^\prime$ are finite
complexes, and $\rho : \pi_1(K^\prime, k_0^\prime) \rightarrow
O(n)$ is a homomorphism, let $\widehat{\rho}$ be the composite
homomorphism $\pi_1(K \times K^\prime, (k_0,k_0^\prime)) 
\rightarrow \pi_1(K^\prime, k_0^\prime) \rightarrow
O(n)$. Assuming that $C(K^\prime, \rho)$ is acyclic, one finds that
\begin{equation} \label{2.7}
T_{K \times K^\prime}(\widehat{\rho}) = 
\chi(K) T_{K^\prime}({\rho}).
\end{equation}
The relation between $R$-torsion and Euler characteristic will be clarified in Section \ref{sect5.4}.

\subsection{Applications of the $R$-torsion} \label{sect2.4}

Returning to lens spaces, we can triangulate $L(7,1)$ and $L(7,2)$.
Running through the homomorphisms from
$\Z_7$ to $O(2)$, one finds that the possible numerical values of 
the $R$-torsion differ for $L(7,1)$ and $L(7,2)$. Hence they cannot
be homeomorphic. (Without using \cite{Chapman (1974)} or \cite{Kirby-Siebenmann (1977)}, the $R$-torsion computations imply that
$L(7,1)$ and $L(7,2)$ are not PL homeomorphic, and in three
dimensions topological manifolds have unique PL structures
\cite{Moise (1952)}.)

More generally, one sees that $L(p,q)$ is homeomorphic to
$L(p, q^\prime)$ if $q^\prime \equiv \pm q \mod p$ (coming from the
involution $z_2 \rightarrow \overline{z_2}$) or
$\pm qq^\prime \equiv 1 \mod p$ (coming from the additional involution $(z_1, z_2) \rightarrow (z_2, z_1)$).
The $R$-torsion shows that this is the only way that $L(p,q)$ can be {\em homeomorphic} to
$L(p, q^\prime)$.  In contrast, $L(p,q)$ is {\em homotopy equivalent} to
$L(p, q^\prime)$ if and only if $\pm q q^\prime$ is a quadratic residue mod $p$.

There are similar statements for higher dimensional lens spaces.
More generally, consider {\em spherical space forms}, meaning quotients
$M = S^r/\Gamma$ where $\Gamma$ is a finite subgroup of $O(r+1)$ that
acts freely on $S^r$. De Rham showed that spherical space forms of a given dimension can be
classified up to isometry by their fundamental groups and $R$-torsions \cite{de Rham (1950)}. As a consequence, two spherical space forms are homeomorphic if and only if they are isometric.

A classical application of the $R$-torsion was to disprove the
{\em Hauptvermutung} (or ``main conjecture'') for simplical complexes.
The motivation went back to the beginnings of
homology theory. The homology groups of a finite simplicial complex
were first defined combinatorially using simplicial homology.  While
simplicial homology had many nice features, it was not at all clear
whether {\em homeomorphic} simplicial complexes had isomorphic simplicial
homology.  One approach to this was to try to prove the 
Hauptvermutung, saying that 
homeomorphic simplicial complexes have combinatorially equivalent subdivisions.
Using the subdivision invariance of simplicial homology, one would then conclude that simplicial homology is homeomorphism invariant.

The homeomorphism invariance of simplicial homology was
proven instead using the simplicial approximation theorem, but the 
validity of the Hauptvermutung stayed open until 1961
when John Milnor gave a counterexample
\cite{Milnor (1961)}.  Let
$\sigma^r$ denote the $r$-simplex.  For $j \in \{1,2\}$, let $L_j$ be a
triangulation of $L(7,j)$.  Consider the product $L_j \times \sigma^r$
and let $X_j$ denote the result of coning off
its boundary $L_j \times \partial \sigma^r$.  If $r \ge 3$
then $X_1$ is homeomorphic to $X_2$. The reason is that from an
argument of Barry Mazur, $L_1 \times \R^r$ is homeomorphic to $L_2 \times \R^r$,
and $X_j$ is homeomorphic to the one point compactification of $L_j \times \R^r$. 
On the combinatorial side, $X_1$ and $X_2$ are simply connected, so one
can't use the $R$-torsion directly. Milnor instead
used a variant of the $R$-torsion. Let $Y_j$ be the one point compactification of the universal cover of
$L_j \times \R^r$, with $y_{j,0}$ being the added point.  It inherits a cellular structure on which
$\Z_7$ acts, freely off of $y_{j,0}$, with quotient $X_j$.  Using a representation
$\rho : \Z_7 \rightarrow O(2)$,
the torsion of the relative chain complex 
$C_*(Y_j, y_{j,0}) \otimes_{\R \Z_7} \R^2$ and its subdivision invariance, Milnor showed that
$X_1$ and $X_2$ do not have combinatorially equivalent subdivisions.

\subsection{Arnold Shapiro} \label{sect2.5}

In the first Ray-Singer paper, one sees, ``We raise the question as to how to describe this
manifold invariant in analytic terms.  Arnold Shapiro once suggested that there might be a
formula for the torsion in terms of the Laplacian $\Delta$ acting on differential forms.''

Arnold Shapiro was a topologist who was born in 1921 and died in 1962, 
nine years before the first Ray-Singer paper appeared.  Shapiro
was a professor at Brandeis and was apparently the first person to come up with
an explicit way to turn the sphere inside out. He is
 known for his paper with Atiyah and Bott on Clifford modules and
K-theory, which appeared two years after his death.  Referring to
the period 1955-1957,
Raoul Bott wrote, ``During that time, and largely at Princeton, I met Serre, Thom, Hirzebruch, Atiyah, Singer, Milnor, Borel, Harish-Chandra, James, Adams,... I could go on and on. But these people, together with Kodaira and Spencer, and my more or less `personal remedial tutor', Arnold Shapiro, were the ones I had the most mathematical contact with."

Regarding Shapiro's suggestion, there is a way to write the
$R$-torsion in terms of {\em combinatorial} Laplacians.  In reference to
the chain complex (\ref{2.1}), the preferred basis of $C_q$ defines an inner
product on $C_q$ for which the basis elements are orthonormal.  Let $\partial^* : C_q \rightarrow C_{q+1}$
be the adjoint operator to $\partial$.  Define the combinatorial
Laplacian $\triangle^{(c)}_q : C_q \rightarrow C_q$ by
$\triangle^{(c)} = - \left( \partial^* \partial + \partial \partial^*
\right)$, a nonpositive self-adjoint operator. If the chain complex $C$ is acyclic then each $\triangle^{(c)}_q$ is invertible and
\cite[Proposition 1.7]{Ray-Singer (1971)} states that
\begin{equation} \label{2.8}
\log \tau(C) = \frac12 \sum_{q=0}^N (-1)^{q+1} q \log 
\det( - \triangle^{(c)}_q).
\end{equation}

\section{Analytical precedents} \label{sect3}

This section describes some of the analytical work leading up to the
Ray-Singer papers.  In Section \ref{sect3.1}, I recall the relation between the
Riemann zeta function and heat conduction on a circle.  Section \ref{sect3.2}
describes the work of Minakshisundaram and Pleijel on defining the zeta
function of the Laplacian on a compact Riemannian manifold.  Section
\ref{sect3.3} is about the subsequent work by McKean and Singer.
A reference for the material in this section is
\cite[Chapter 2]{Berline-Getzler-Vergne (2004)}.

Following Shapiro's suggestion to write the $R$-torsion in terms of
differential form Laplacians, and in view of (\ref{2.8}), 
Ray and Singer faced the task of making sense of
the determinant of a self-adjoint operator on a Hilbert space.  The relevant
operators had discrete spectrum but the product of the eigenvalues was
not convergent.  Instead it was typically a product such as
$1 \cdot 2 \cdot 3 \cdot \ldots$.

There is a well known way to take the {\em sum} of such numbers, giving
the semiserious formula
$1 + 2 + 3 + \ldots = - \frac{1}{12}$.
The meaning of this formula is in terms of the Riemann zeta function
\begin{equation} \label{3.1}
\zeta(s) = 1 + \frac{1}{2^s} + \frac{1}{3^s} + \frac{1}{4^s} + \ldots.
\end{equation}
Formally, $1 + 2 + 3 + \ldots =  \zeta(-1)$.  On the other hand,
we know that the zeta function can be meromorphically continued from
$\Re(s) > 1$ to the complex plane and its value at $s = -1$ is rigorously $- \frac{1}{12}$.

Ray and Singer showed how to give a meaning to determinants of Laplacians on 
manifolds, using analytic continuation.  

\subsection{Riemann zeta function and heat conduction} \label{sect3.1}

Let us first recall the relationship
between the Riemann zeta function and
heat conduction on a circle. 

Let $S^1(L)$ denote a circle of length $L$.
The heat equation on the circle, with initial condition $u_0$,
is 
\begin{equation} \label{3.2}
\frac{\partial u}{\partial t} = \frac{\partial^2 u}{\partial x^2} , \: \: \: \: \: \: u(0,x) = u_0(x).
\end{equation}
Here $u_0$ is a function on $S^1(L)$ or, equivalently, an $L$-periodic
function on $\R$.
The  time-$t$ solution can be written as $u_t(x) = \int_{S^1(L)} K(t,x,y) \:
u_0(y) \: dy$ where $K(t,x,y)$ is the heat kernel or, in operator terms, as $u_t =  e^{t \partial_x^2} u_0$.  
Just as the trace of a matrix can be written as a sum of diagonal entries, we can write
\begin{equation} \label{3.3}
\Tr \left( e^{t \partial_x^2} \right) = \int_{S^1(L)} K(t,x,x) \: dx.
\end{equation}

The eigenvalues of
$\partial_x^2$ are zero, with multiplicity one, and 
$\left\{- \left( \frac{2 \pi j}{L} \right)^2 \right\}_{j=1}^\infty$,
each with multiplicity two. Hence
\begin{equation} \label{3.4}
\Tr \left( e^{t \partial_x^2} \right) =
1 + 2 \sum_{j=1}^\infty e^{- t \left( \frac{2 \pi j}{L} \right)^2}.
\end{equation}
Using the formula $\lambda^{-s} = \frac{1}{\Gamma(s)} \int_0^\infty t^{s-1} e^{- \lambda t} dt$ for positive $s$ and $\lambda$, we 
find that
\begin{equation} \label{3.5}
2 \left( \frac{L}{2\pi} \right)^{2s} \zeta(2s) = \frac{1}{\Gamma(s)} \int_0^\infty t^{s-1} \left(
\Tr \left( e^{t \partial_x^2} \right) - 1 \right) dt
\end{equation}
provided that $\Re(s) > \frac12$. 

We could describe the meromorphic
continuation of (\ref{3.5}) using what is known about the Riemann zeta function but to give a direct description, 
we choose $\epsilon > 0$ and write the right-hand side as a sum of two terms, the first coming from the $t$-integration
between $0$ and $\epsilon$, and the second coming from the $t$-integration between $\epsilon$ and $\infty$.  
Since $\Tr \left( e^{t \partial_x^2} \right) - 1$ decays exponentially fast in $t$, the
second term is analytic in $s$. To handle the first term, there is an explicit formula
\begin{equation} \label{3.6}
K(t,x,y) = \frac{1}{\sqrt{4 \pi t}} \sum_{k \in \Z} 
e^{- \frac{|x-y-kL|^2}{4t}}
\end{equation}
for $x, y \in [0, L)$, 
coming from the heat kernel $\frac{1}{\sqrt{4 \pi t}}  
e^{- \frac{|x-y|^2}{4t}}$ on $\R$ and thinking of $x$ as receiving heat
from sources $\{y+kL\}_{k \in \Z}$ in $\R$.
Then the first term is
\begin{align} \label{3.7}
& \frac{1}{\Gamma(s)} \int_0^\epsilon t^{s-1} \left(
\Tr \left( e^{t \partial_x^2} \right) - 1 \right) dt =
 \frac{1}{\Gamma(s)} \int_0^\epsilon t^{s-1} \left(
\int_{S^1(L)} K(t,x,x) \: dx - 1 \right) dt = \\
& \frac{1}{\Gamma(s)} \int_0^\epsilon t^{s-1} \left(
\frac{1}{\sqrt{4 \pi t}} L \sum_{k \in \Z}
e^{- \frac{L^2 k^2}{4t}}
 - 1 \right) dt. \notag
\end{align}
The only possible singularities come from the $k=0$ term, i.e.
$\frac{1}{\Gamma(s)} \int_0^\epsilon t^{s-1}
\frac{L}{\sqrt{4 \pi t}} \: dt$, which for large $s$ equals $\frac{L}{\sqrt{4 \pi}} \frac{\epsilon^{s-\frac12}}{\Gamma(s)}
\frac{1}{s-\frac12}$. Hence the only singularity is a simple pole at $s = \frac12$ with residue $\frac{L}{2\pi}$.

There is a similar story when the circle is replaced with a flat torus of dimension $N$.  The Riemann zeta function is
replaced by the Epstein zeta function.  The analogous expression to (\ref{3.5}) acquires a simple pole at $\frac{N}{2}$ with a
residue equal to $\frac{1}{(4\pi)^{\frac{N}{2}} \Gamma(\frac{N}{2})}$ times the volume of the torus.

\subsection{Minakshisundaram-Pleijel} \label{sect3.2}

Going from circles to manifolds,
Subbaramiah Minakshisundaram, from India, and \r{A}ke Pleijel, from Sweden, 
defined the zeta function of the Laplacian on a compact Riemannian manifold
\cite{Minakshisundaram-Pleijel (1949)}.
The collaboration happened when they were
visiting the Institute for Advanced Study during the 1947-1948 academic year.
They had each worked on related problems for domains in the plane but neither of them had used Riemannian manifolds before. 

If $\triangle$
denotes the (nonpositive) Laplace operator then heat conduction on a closed Riemannian
manifold $M$ satisfies the equation
\begin{equation} \label{3.8}
\frac{\partial u}{\partial t} = \triangle u , \: \: \: \: \: \: u(0,x) = u_0(x).
\end{equation}
The solution can be written $u_t(x) = \int_{M} K(t,x,y) \:
u_0(y) \: \dvol(y)$, where $K(t,x,y)$ is the heat kernel, or in operator terms as $u(t) =  e^{t \triangle} u_0$. The method of
Minakshisundaram-Pleijel was to first write an approximate
solution to $K(t,x,y)$ in normal coordinates around a point
$y \in M$.  Their approximate solution or ``parametrix'' was of the form
\begin{equation} \label{3.9}
H(t,x,y) = (4 \pi t)^{- \frac{N}{2}} e^{- \frac{r^2}{4t}} \left( U_0 + U_1 t + \ldots +
U_n t^n \right).
\end{equation}
Here $N = \dim(M)$, $r$ is the distance from $x$ to $y$, 
each $U_j$ is a function of $x$ and $y$, and $n$ is a parameter.
The $U_j$'s were computed recursively by
requiring that $H$ satisfy the PDE (\ref{3.8}) to leading order. This gave a formula for
$U_j$ in terms of $U_{j-1}$; the starting point was $U_0 = 1$.

Minakshisundaram and Pleijel were able to estimate the error when approximating $K$ by $H$. 
(More precisely, they passed to Green's functions.)  They then considered
the expression
\begin{equation} \label{3.10}
\zeta_{x,y}(s) = \frac{1}{\Gamma(s)} \int_0^\infty t^{s-1} \left( K(t,x,y) - 1 \right) \: dt.
\end{equation}
If $\{\lambda_j\}_{j=1}^\infty$ are the nonzero eigenvalues of $\triangle$ then
the corresponding zeta function is defined to be
\begin{equation} \label{3.11}
\zeta_{\triangle}(s) = \sum_j (- \lambda_j)^{-s} = \int_M \zeta_{x,x}(s) \: \dvol(x).
    \end{equation}
        The main result of Minakshisundaram and Pleijel was the
        following.

        \begin{theorem} \label{3.12} \cite{Minakshisundaram-Pleijel (1949)}
            If $x \neq y$ then $\zeta_{x,y}(s)$ extends to an
            analytic function of $s$.  If $x = y$ then
           $\zeta_{x,x}(s)$ extends to a meromorphic function of $s$.
           If $N$ is odd then $\zeta_{x,x}(s)$ has simple poles at
           $\frac{N}{2} - j$ for $j = 0, 1, 2, \ldots$, while if $N$ is
           even then $\zeta_{x,x}(s)$ has simple poles at 
           $\frac{N}{2}, \frac{N}{2} - 1, \ldots, 1$.   
           If $N$ is odd then $\zeta_{x,x}(s)$ vanishes at
           nonpositive integers.
           
           There is a similar statement for $\zeta_{\triangle}(s)$.
        \end{theorem}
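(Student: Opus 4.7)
The plan is to split $\zeta_{x,y}(s) = A_\epsilon(s) + B_\epsilon(s)$ at a small $\epsilon > 0$, where $A_\epsilon$ is the integral of $t^{s-1}(K(t,x,y) - 1)/\Gamma(s)$ on $(0,\epsilon)$ and $B_\epsilon$ the corresponding integral on $(\epsilon, \infty)$. Expanding $K$ in the $L^2$ eigenbasis of $\triangle$, the zero mode contributes a constant in $t$ which (up to an entire function of $s$) is cancelled by the subtraction in the integrand, while the positive-eigenvalue modes decay exponentially in $t$ by the spectral gap, with uniform control in $x,y$ via Weyl's law. Hence $B_\epsilon(s)$ is entire in $s$ and all singular behaviour of $\zeta_{x,y}$ lives in $A_\epsilon$.

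For $A_\epsilon$ the key tool is the parametrix $H(t,x,y)$ from (\ref{3.9}). By the recursive choice of the $U_j$'s, $H$ solves the heat equation to order $t^n$, in the sense that $(\partial_t - \triangle_x) H$ is smooth, supported near the diagonal, and bounded by a multiple of $t^{n-N/2} e^{-r^2/(4t)}$. Duhamel's principle then yields a uniform estimate $(K-H)(t,x,y) = O(t^{n+1-N/2})$ on $(0,\epsilon]$. Consequently $\frac{1}{\Gamma(s)}\int_0^\epsilon t^{s-1}(K-H)(t,x,y) \, dt$ is holomorphic in the half-plane $\re(s) > \frac{N}{2} - n - 1$, and since $n$ is arbitrary, the $(K-H)$-piece contributes an entire function to $\zeta_{x,y}$. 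The entire question thus reduces to the explicit parametrix integral
\begin{equation*}
P(s;x,y) \;=\; \frac{1}{\Gamma(s)(4\pi)^{N/2}} \sum_{j=0}^n U_j(x,y) \int_0^\epsilon t^{s-1+j-\frac{N}{2}} e^{-r^2/(4t)} \, dt.
\end{equation*}

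If $x \neq y$ then $r = d(x,y) > 0$ and the factor $e^{-r^2/(4t)}$ kills the integrand to infinite order as $t \to 0^+$, so each integral, hence $P$ and $\zeta_{x,y}$, is entire in $s$. If $x = y$ then $r = 0$ and the $j$-th integral equals $\epsilon^{s+j-N/2}/(s+j-\tfrac{N}{2})$, giving a candidate simple pole at $s_j := \frac{N}{2} - j$ with residue $U_j(x,x)/[(4\pi)^{N/2}\Gamma(s_j)]$. When $N$ is odd, every $s_j$ is a half-integer, so no candidate pole collides with a zero of $\Gamma(s)^{-1}$ and all of the poles $s_j$ for $j \geq 0$ persist; moreover, at each nonpositive integer $-m$, the parametrix sum is regular while $\Gamma(s)^{-1}$ vanishes, giving $\zeta_{x,x}(-m) = 0$. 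When $N$ is even, the candidate pole at $s_j$ for $j \geq N/2$ lies at a nonpositive integer and is precisely cancelled by the zero of $\Gamma(s)^{-1}$, leaving simple poles only at $s = \frac{N}{2}, \frac{N}{2}-1, \ldots, 1$. The statement for $\zeta_\triangle(s) = \int_M \zeta_{x,x}(s) \, \dvol(x)$ then follows by integration in $x$, since all estimates are uniform in $x \in M$, with residues becoming integrals of $U_j(x,x)/[(4\pi)^{N/2}\Gamma(s_j)]$ over $M$.

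The main obstacle is genuinely the uniform remainder estimate $K - H = O(t^{n+1-N/2})$: this is the technical heart of Minakshisundaram--Pleijel and requires a careful construction of the $U_j$'s in normal coordinates around each $y$, control of their derivatives, and a patching argument using a cutoff near the cut locus so that the local parametrix is well defined on all of $M \times M$. Once that estimate is established, the remaining argument is essentially a formal manipulation of gamma factors and simple poles of Mellin-type integrals.
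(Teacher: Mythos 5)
Your proof is correct and follows the same strategy the paper outlines: the paper says only that the pole structure and the values at nonpositive integers ``can be seen by plugging the parametrix (\ref{3.9}) into (\ref{3.10}) and integrating from $0$ to $\epsilon$,'' and your $A_\epsilon + B_\epsilon$ decomposition, the Duhamel estimate $K - H = O(t^{n+1-N/2})$, and the explicit Mellin integrals $\int_0^\epsilon t^{s-1+j-N/2}e^{-r^2/4t}\,dt$ are precisely the details behind that one-sentence sketch. Your identification of the residues $U_j(x,x)/[(4\pi)^{N/2}\Gamma(N/2 - j)]$ and the cancellation of poles against zeros of $\Gamma(s)^{-1}$ for even $N$ are all correct, and you rightly flag the uniform remainder estimate and the cutoff near the cut locus as the real technical content. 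One small point to tighten: the ``$-1$'' subtraction in (\ref{3.10}) contributes $-\epsilon^s/(s\Gamma(s)) = -\epsilon^s/\Gamma(s+1)$, which vanishes at negative integers $s = -1, -2, \ldots$ but equals $-1$ at $s = 0$; so for $N$ odd what you actually prove is $\zeta_{x,x}(-m) = 0$ for $m \geq 1$ while $\zeta_{x,x}(0) = -1$ (this imprecision is inherited from the theorem as stated in the paper, not introduced by you, but your sentence ``the parametrix sum is regular while $\Gamma(s)^{-1}$ vanishes, giving $\zeta_{x,x}(-m)=0$'' silently drops that term at $m=0$). Also, strictly speaking one should say ``possible simple poles at $N/2 - j$,'' since a pole is present only when $U_j(x,x) \neq 0$; again this is shared with the statement itself.
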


        The statements about the locations of the poles of 
        $\zeta_{x,x}(s)$ and the values at nonpositive integers can
        be seen 
        by plugging the parametrix (\ref{3.9}) into (\ref{3.10})
        and integrating from $0$ to $\epsilon$. 
        If $N$ is even then the values of $\zeta_{x,x}(s)$ at
        nonpositive integers can be computed this way.
        In particular, they are expressions in the Riemannian 
metric and its derivatives, up to a certain order, at $x$.

\subsection{McKean-Singer} \label{sect3.3}

A 1967 paper by Henry McKean and Singer extended the
Minakshisundaram-Pleijel work in several ways \cite{McKean-Singer (1967)}.
First, the coefficients $U_1(x,x)$ and $U_2(x,x)$ in (\ref{3.9}) were
computed. They found that $U_1(x,x) = \frac{R(x)}{6}$, where $R$ is
the scalar curvature, and $U_2(x,x)$ is the sum of a quadratic
expression in the curvature tensor and a multiple of $\triangle R$.

Second, McKean and Singer considered Laplacians on manifolds with
boundary, with Dirichlet or Neumann boundary conditions. They 
constructed a parametrix and showed that there is again an
asymptotic expansion
\begin{equation} \label{3.13}
\Tr \left( e^{t \triangle} \right) = \int_M K(t,x,x) \: \dvol(x) \sim (4 \pi t)^{- \frac{N}{2}} 
(c_0 + c_1 t^{\frac12} + c_2 t + \ldots)
\end{equation} 
for small $t$, but now with
half-integer powers.

Third, and what is most relevant for the Ray-Singer papers, they discussed 
such asymptotic expansions when the function Laplacian is replaced by the
Hodge Laplacian on differential forms.  Their motivation for this discussion
came from the possibility of proving the Chern-Gauss-Bonnet theorem using
heat equation methods, a possibility that was later realized.

\section{The Ray-Singer papers} \label{sect4}

The Ray-Singer papers were joint works between Daniel Ray
and Isadore Singer.  Ray was a faculty member at MIT from 1957 to 1979.
His
specialties were stochastic processes and spectral problems. 
I had Ray as a teacher for undergraduate analysis.

Section \ref{sect4.1} describes the first Ray-Singer paper and Section \ref{sect4.2}
describes the second Ray-Singer paper.

\subsection{The first Ray-Singer paper} \label{sect4.1}

Recall equation (\ref{2.8}), giving the $R$-torsion in terms of
combinatorial Laplacians, and Shapiro's suggestion that there
may be a formula for the torsion in terms of the Laplacian acting
on differential forms. 

Whereas the determinants in (\ref{2.8}) are of finite dimensional
operators, it is not clear what the determinant of an
(infinite dimensional) differential
form Laplacian should mean.  It was for this purpose that Ray and
Singer introduced {\em zeta function regularized determinants}.
To motivate this notion, consider the calculus formula
\begin{equation} \label{4.1}
\log \lambda = - \frac{d}{ds} \Big|_{s=0} e^{-s \log(\lambda)} = - \frac{d}{ds} \Big|_{s=0} \lambda^{-s}
\end{equation}
for $\lambda$ positive. Extending this to matrices,
if $M$ is a positive definite square matrix then
using the spectral theorem,
$\Tr \left( M^{-s} \right)$ is an analytic function of $s$ and
\begin{equation} \label{4.2}
\log \det(M) = - \frac{d}{ds} \Big|_{s=0} \Tr \left( M^{-s} \right).
\end{equation}
The idea of Ray and Singer was to use (\ref{4.2}) to {\em define}
the determinant of suitable infinite dimensional operators.

To specify the relevant operators, 
let $W$ be a connected closed orientable Riemannian manifold and 
let $E$ be a flat orthogonal vector bundle on $W$ or, equivalently,
a homomorphism $\rho : \pi_1(W, w_0) \rightarrow O(n)$.
Let $\Omega^q(W, E)$ denote the smooth $q$-forms on $W$ with value in $E$.
The Riemannian metric on $W$, along with the standard inner product on $\R^n$, 
gives an $L^2$-inner product on $\Omega^q(W, E)$. The exterior derivative
$d : \Omega^*(W, E) \rightarrow \Omega^{*+1}(W, E)$ has a formal adjoint
$\delta : \Omega^*(W, E) \rightarrow \Omega^{*-1}(W, E)$. The differential
form Laplacian is defined to be $\triangle = - (\delta d + d \delta)$.  Let
$\triangle_q$ denote the restriction of $\triangle$ to $\Omega^q(W, E)$.

Let us initially suppose that $\triangle_q$ is negative definite for all $q$.
By the Hodge theorem, this is equivalent to saying that 
$\HH^q(W, E) = 0$ for all $q$. Putting $\zeta_{q,\rho}(s) = 
\Tr \left( 
(- \triangle_q)^{-s} \right)$,
we can {\em define} $\det (\triangle_q)$ by 
\begin{equation} \label{4.3}
\log \det(\triangle_q) = - \frac{d}{ds} \Big|_{s=0} \zeta_{q,\rho}(s).
\end{equation}
The key point is that $\zeta_{q,\rho}(s)$ is analytic near $s = 0$, so
the definition makes sense. If we think of descending from
$s$ large, where $\zeta_{q,\rho}(s)$ makes conventional sense, 
then we encounter poles in $\zeta_{q,\rho}(s)$ at
$s = \frac{N}{2}, \frac{N}{2} - 1$, etc.  So 
$\log \det(\triangle_q)$ can only be computed after traversing
all of these poles.

\begin{definition} \label{4.4}
Suppose that the flat vector bundle $E$ is acyclic, i.e. that
$\HH^*(W, E) = 0$. The analytic torsion is the positive real number
$T_W(\rho)$ such that
\begin{equation} \label{4.5}
\log T_W(\rho
) = \frac12 \sum_{q=0}^N (-1)^q q \zeta^\prime_{q,\rho}(0).
    \end{equation}
\end{definition}

Note the similarity with (\ref{2.8}).
Using Hodge duality, one finds that $T_W(\rho) = 1$ if $W$ is even dimensional.
The main result of the Ray-Singer
paper, that $T_W(\rho)$ is a diffeomorphism invariant of $W$, follows
from the next theorem.

\begin{theorem} \label{4.6} \cite{Ray-Singer (1971)}
$T_W(\rho)$ is independent of the Riemannian metric on $W$.
\end{theorem}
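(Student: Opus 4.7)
The plan is to show $\frac{d}{du}\log T_W(\rho)\big|_{u=0}=0$ for every smooth one-parameter family $g_u$ of Riemannian metrics on $W$. Since $T_W(\rho)=1$ when $N=\dim W$ is even (by the Hodge-duality remark following (\ref{4.5})), I may assume $N$ is odd. Acyclicity makes each $\triangle_q$ invertible, so $\Tr(e^{t\triangle_q})$ decays exponentially as $t\to\infty$; combined with the Minakshisundaram--Pleijel expansion of Theorem \ref{3.12} this justifies the Mellin representation
\[
\zeta_{q,\rho}(s)\;=\;\frac{1}{\Gamma(s)}\int_0^\infty t^{s-1}\,\Tr\!\left(e^{t\triangle_q}\right)dt
\]
and ensures $\zeta_{q,\rho}(s)$ is analytic at $s=0$, so that $\zeta'_{q,\rho}(0)$ is well-defined.

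I next compute the variation of the Laplacian. The exterior derivative is metric-independent, so only $\delta$ varies: writing $\alpha_q=*_q^{-1}\dot *_q\in\End(\Omega^q(W,E))$, the derivative $\dot\delta$ is a first-order operator in the $\alpha_\bullet$ and
\[
\dot\triangle_q\;=\;-\,\{d,\dot\delta\}\big|_{\Omega^q}.
\]
Duhamel's formula and cyclicity of the trace give $\frac{d}{du}\Tr(e^{t\triangle_q})=t\,\Tr(\dot\triangle_q e^{t\triangle_q})$, hence
\[
\frac{d}{du}\zeta_{q,\rho}(s)\;=\;-\,s\,\Tr\!\left((-\triangle_q)^{-s-1}\{d,\dot\delta\}\right).
\]
The heart of the argument is a telescoping identity. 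The intertwining $d\,\triangle_q=\triangle_{q+1}d$ with cyclicity gives
$\Tr((-\triangle_q)^{-s-1}\dot\delta d)=\Tr((-\triangle_{q+1})^{-s-1}d\dot\delta)$, and shifting the summation index collapses
\[
\sum_{q=0}^N(-1)^q q\,\frac{d}{du}\zeta_{q,\rho}(s)\;=\;-\,s\,\tilde A(s),\qquad
\tilde A(s)\;=\;\sum_{q=0}^N(-1)^q\Tr\!\left(d\dot\delta\,(-\triangle_q)^{-s-1}\big|_{\Omega^q}\right)\;=\;\Tr_s\!\bigl(d\dot\delta\,(-\triangle)^{-s-1}\bigr),
\]
where $\Tr_s$ is the supertrace on $\Omega^*(W,E)$. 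Differentiating at $s=0$ yields
\[
2\,\frac{d}{du}\log T_W(\rho)\;=\;-\,\tilde A(0),
\]
reducing the theorem to the vanishing of $\tilde A(0)$ in odd dimensions.

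Finally I analyze $\tilde A(0)$ via Mellin: $\tilde A(s)=\Gamma(s+1)^{-1}\int_0^\infty t^s\psi(t)\,dt$ with $\psi(t)=\Tr_s(d\dot\delta\,e^{t\triangle})$. Applying the parametrix (\ref{3.9}) to the composite $d\dot\delta\,e^{t\triangle_q}$ gives a short-time supertrace expansion $\psi(t)\sim(4\pi t)^{-N/2}\sum_{j\ge 0}c_j t^j$ with locally computable $c_j$. For odd $N$ every exponent $j-N/2$ is a half-integer, so the meromorphic continuation of the Mellin integral has no pole at $s=0$ and a Seeley-type analysis identifies $\tilde A(0)$ with the integral of a purely local density built from the $c_j$. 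Because no integer power of $t$ can appear in a half-integer expansion, this local density vanishes pointwise, yielding $\tilde A(0)=0$ and hence metric-independence of $T_W(\rho)$.

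The main obstacle is the fourth paragraph: the telescoping of paragraph three is careful but routine trace algebra, whereas pinning down $\tilde A(0)$ as a purely local invariant and exhibiting its vanishing by the odd-dimensional parity of the heat-kernel coefficients is the conceptual crux. Concretely it requires extracting the contribution of the singular part of the short-time expansion to the analytic continuation at $s=0$, exactly as in the Minakshisundaram--Pleijel/McKean--Singer treatment described in Sections \ref{sect3.2}--\ref{sect3.3}, and checking that all candidate local terms are killed by dimension parity.
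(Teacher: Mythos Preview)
Your variational setup and the telescoping identity are correct and parallel the paper's approach. The gap is in your final paragraph, where you assert that $\tilde A(0)$ is the integral of a purely local density and therefore vanishes by parity. That does not follow from the short-time expansion alone. In the Mellin representation
\[
\tilde A(s)=\frac{1}{\Gamma(s+1)}\int_0^\infty t^{s}\,\psi(t)\,dt,
\]
the prefactor $1/\Gamma(s+1)$ equals $1$ at $s=0$, so $\tilde A(0)$ receives a contribution from the entire $t$-integral, including the large-$t$ regime, which is global spectral data. The half-integer exponents in odd dimension only guarantee that there is no \emph{pole} at $s=0$; they do not force the \emph{value} there to vanish. (Contrast this with $\Tr\bigl((-\triangle)^{-s}\bigr)$, whose Mellin carries a $1/\Gamma(s)$ prefactor that \emph{vanishes} at $s=0$; in that case the value at $0$ genuinely is the local constant term of the heat expansion.) For a generic second-order insertion $P$, the quantity $\Tr\bigl(P(-\triangle)^{-s-1}\bigr)\big|_{s=0}$ is simply not local.

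What the paper does---and what you are missing---is to exploit the explicit form $\dot\delta=[\delta,\alpha]$ one step further. Using supertrace cyclicity together with $[d,\triangle]=[\delta,\triangle]=0$, one finds
\[
\psi(t)=\Tr_s\!\bigl(d\dot\delta\,e^{t\triangle}\bigr)=\Tr_s\!\bigl(\alpha(d\delta+\delta d)\,e^{t\triangle}\bigr)=-\frac{d}{dt}\,\Tr_s\!\bigl(\alpha\,e^{t\triangle}\bigr).
\]
Integrating by parts in the Mellin transform converts $1/\Gamma(s+1)$ into $1/\Gamma(s)$ and yields $\tilde A(s)=\Tr_s\!\bigl(\alpha(-\triangle)^{-s}\bigr)$, with $\alpha$ a \emph{zeroth}-order bundle endomorphism. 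Now the value at $s=0$ really is the coefficient of $t^0$ in the expansion of $\Tr_s(\alpha\,e^{t\triangle})$, which is local and vanishes in odd dimension by Theorem~\ref{3.12}. This reduction to a zeroth-order insertion---equation~(\ref{4.7}) in the paper---is precisely the step that makes the parity argument work.
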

\begin{proof}
We may assume that $W$ is odd dimensional.  Suppose that $g_0$ and $g_1$ are two
Riemannian metrics on $W$.  Putting $g(u) = u g_1 + (1-u) g_0$ and computing
the analytic torsion with respect to $g(u)$, it suffices to
show that $\frac{d}{du} \log T_W(\rho) = 0$.

In terms of the Hodge duality operator $\star$, one can write
$\delta = \pm \star^{-1} \circ d \circ \star$. Putting $\alpha = \star^{-1}
\frac{d \star}{du}$, one has
$\frac{d\delta}{du} = [\delta, \alpha]$. Then
$\frac{d\triangle}{du} = -[\delta, \alpha]d - d [\delta, \alpha]$.

Now $\frac{d}{du} \Tr (- \triangle_q)^{-s} = 
s \Tr \left( \frac{d \triangle_q}{du} (- \triangle_q)^{-s-1} \right)$;
this is justified when $\Re(s)$ is large enough and then extends by
analytic continuation. After some
rearrangement, one finds
\begin{equation} \label{4.7}
\frac{d}{du} \log T_W(\rho) = \frac12 \frac{d}{ds} \Bigg|_{s=0} \sum_{q=0}^N (-1)^{q+1} s \Tr
\left( \alpha (- \triangle_q)^{-s}  \right).
\end{equation}
The key term is the $(- \triangle_q)^{-s}$ appearing on the
right-hand side of (\ref{4.7}). Theorem \ref{3.12}, or more precisely its
extension to differential forms, implies that the expression
\begin{equation} \label{4.8}
\Tr
\left( \alpha (- \triangle_q)^{-s}  \right)
= \int_M \tr \left( \alpha(x) \: \zeta_{x,x}(s) \right) \: \dvol(x)
\end{equation}
vanishes at $s=0$.
Then the additional factor of $s$ in (\ref{4.7}) gives 
$\frac{d}{du} \log T_W(\rho) = 0$.
    \end{proof}

    Ray and Singer gave evidence that the analytic torsion equals the $R$-torsion.
    For example, they showed that the analog of (\ref{2.7}) holds for the analytic torsion.

    In some ways, it is convenient to remove the acyclicity assumption that
    $\HH^*(W, E) = 0$. If $\HH^*(W, E)$ is nonzero then
    Ray and Singer defined $T_W(\rho)$ as in (\ref{4.5}), where the
    zero eigenvalues of $\triangle_q$ are removed when constructing $\zeta_{q,\rho}(s)$.
    That is,
    \begin{equation} \label{4.9}
    \zeta_{q,\rho}(s) = \frac{1}{\Gamma(s)} \int_0^\infty t^{s-1} \Tr \left(
    e^{t \triangle_q} - P_{\Ker(\triangle_q)} \right) \: dt,
    \end{equation}
    where $P_{\Ker(\triangle_q)}$ denotes orthogonal projection onto the
    (finite dimensional) kernel of $\triangle_q$.
    On the $R$-torsion side, the Hodge isomorphism gives an $L^2$-inner product
    on $\HH^q(W, E)$, and hence an orthonormal basis.  Fixing this inner product,
    Ray and Singer showed that the $R$-torsion $\tau_{K}({\rho})$ is unchanged upon
    a subdivision of a triangulation $K$ of $W$. If the Riemannian metric varies,
    they computed the variations of $T_W(\rho)$ and $\tau_{K}({\rho})$. The variations
    were generally nonzero but just involved the change of volume form on 
    $\HH^*(W, E)$. In particular, they derived that $T_W(\rho)/\tau_{K}({\rho})$
    is metric independent, even in the nonacyclic case.

    Much of the Ray-Singer paper dealt with the technically more challenging case when
    $W$ is allowed to have boundary.  Without entering into the details, suffice it to
    say that most of their results for the closed case extend to the case of nonempty
    boundary.

\subsection{The second Ray-Singer paper} \label{sect4.2}

In a sequel, Ray and Singer defined an
analytic torsion in the holomorphic setting
\cite{Ray-Singer (1973)}.  In this case there was no classical counterpart.

The analogy was that a smooth manifold goes to a complex manifold, a
Riemannian metric goes to a Hermitian metric and the Hodge Laplacian
goes to the $\overline{\partial}$-Laplacian.

Suppose now that $W$ is a compact connected complex manifold of complex dimension $N$.
Let $\rho : \pi_1(W, w_0) \rightarrow U(n)$ be a homomorphism, with corresponding
flat holomorphic vector bundle $L$ on $W$.
For each $p$, there is a complex 
\begin{equation} \label{4.10}
    \ldots 
 \stackrel{\overline{\partial}}{\longrightarrow} \Omega^{p,q}(W, L) 
\stackrel{\overline{\partial}}{\longrightarrow}  
\Omega^{p,q+1}(W, L) \stackrel{\overline{\partial}}{\longrightarrow} \ldots
\end{equation}
and corresponding Laplacian $\triangle_p = - \left( \overline{\partial}^* \overline{\partial} 
+ \overline{\partial} \overline{\partial}^* \right)$. Let $\triangle_{p,q}$ denote the
restriction of $\triangle_p$ to $\Omega^{p,q}(W, L)$.
Put $\zeta_{p,q,\rho}(s) = \Tr \left( (- \triangle^\prime_{p,q})^{-s} \right)$, where
the $\prime$ on $\triangle^\prime_{p,q}$ indicates that zero eigenvalues are neglected.

\begin{definition} \label{4.11}
Given $p$, the $\overline{\partial}$-torsion $T_p(W, \rho)$
 is the positive real number
 such that
\begin{equation} \label{4.12}
\log T_p(W,\rho) = \frac12 \sum_{q=0}^N (-1)^q q \zeta^\prime_{p,q,\rho}(0).
    \end{equation}
\end{definition}

Regarding the dependence of $T_p(W,\rho)$ on the Hermitian metric, the
proof of Theorem \ref{4.6} goes through except for the last step.
Because the real dimension of $W$ is even, $\Tr
\left( \alpha (- \triangle_q)^{-s}  \right)$ need not vanish at $s=0$.
However, Ray and Singer deduced the following statement.
\begin{theorem} \label{4.13} \cite{Ray-Singer (1973)}
Let $\rho_1$ and $\rho_2$ be two homomorphisms from $\pi_1(W, w_0)$ to
$U(n)$, with corresponding flat vector bundles $L_1$ and $L_2$,
respectively. 
Given $p$, suppose that $\HH^{p,q}(W, L_j) = 0$ for all $q \in [0, N]$
and for $j \in \{1,2\}$.  Then $T_p(W,\rho_1)/T_p(W,\rho_2)$ is independent
of the Hermitian metric on $W$.
\end{theorem}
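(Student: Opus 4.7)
The plan is to mimic the variation argument in the proof of Theorem \ref{4.6} and isolate where the even real dimension of $W$ breaks the earlier vanishing step. Choose a smooth family of Hermitian metrics $g(u)$, $u\in[0,1]$, on $W$. Acyclicity of $(\Omega^{p,\bullet}(W,L_j),\overline{\partial})$ makes each $\triangle_{p,q,j}$ invertible for $j\in\{1,2\}$. Since the Hermitian metric on $L_j$ is determined (and held constant) by the unitary structure of $\rho_j$, the bundle endomorphism $\alpha_j$ of $\Omega^{p,q}(W,L_j)$ encoding $dg(u)/du$ (the $\overline{\partial}$-analog of $\star^{-1}d\star/du$) splits as $\alpha\otimes\Id_{L_j}$ for a single $W$-side zeroth-order operator $\alpha$ common to both bundles. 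The same Duhamel-type rearrangement used in Theorem \ref{4.6} yields
\begin{equation*}
\frac{d}{du}\log T_p(W,\rho_j) \;=\; \frac{1}{2}\,\frac{d}{ds}\Big|_{s=0}\sum_{q=0}^N (-1)^{q+1}\,s\,\Tr\!\left(\alpha_j\,(-\triangle_{p,q,j})^{-s}\right).
\end{equation*}

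The obstacle is that $W$ has even real dimension $2N$, so the extension of Theorem \ref{3.12} to twisted $\overline{\partial}$-Laplacians does not force $\Tr(\alpha_j(-\triangle_{p,q,j})^{-s})$ to vanish at $s=0$. Instead, its value there equals a local integral
\begin{equation*}
\Tr\!\left(\alpha_j(-\triangle_{p,q,j})^{-s}\right)\!\Big|_{s=0}\;=\;\int_W \tr\!\left(\alpha_j(x)\cdot k_{p,q,j}(x)\right)\dvol(x),
\end{equation*}
where $k_{p,q,j}(x)$ is the coefficient of $t^0$ in the on-diagonal small-$t$ heat kernel expansion of $-\triangle_{p,q,j}$. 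Multiplying by $s$ and differentiating at $s=0$ reproduces this integral rather than killing it, so one cannot conclude metric independence of $T_p(W,\rho_j)$ by itself.

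The decisive step is to show that the integrands for $j=1$ and $j=2$ agree pointwise. Because $\rho_j$ is unitary, around each $x\in W$ the flat bundle $L_j$ admits a parallel unitary local frame; in such a frame the twisted Dolbeault complex is identified with $n$ copies of the untwisted one, and the Minakshisundaram--Pleijel recursion for the heat kernel coefficients produces no $L_j$-curvature contributions. Hence
\begin{equation*}
k_{p,q,j}(x) \;=\; k_{p,q}(x)\otimes \Id_{L_{j,x}},
\end{equation*}
with $k_{p,q}(x)$ a universal polynomial in the Hermitian metric on $W$ and its derivatives at $x$, independent of $j$. Combined with $\alpha_j=\alpha\otimes\Id_{L_j}$, the fiberwise trace gives $\tr(\alpha_j(x)\cdot k_{p,q,j}(x))=n\cdot\tr(\alpha(x)\cdot k_{p,q}(x))$, the same for $j=1$ and $j=2$.

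Subtracting the two variation formulas therefore yields $\frac{d}{du}\log(T_p(W,\rho_1)/T_p(W,\rho_2))=0$, and integrating over $u\in[0,1]$ shows the ratio is independent of the Hermitian metric on $W$. The main obstacle is the third step: running the parametrix construction in the twisted setting and extracting from it the statement that, for flat unitary $L_j$, the constant term in the asymptotic expansion factors as a universal Hermitian density on $W$ tensored with $\Id_{L_j}$. Everything else is a straightforward transcription of the real-case computation from Theorem \ref{4.6}.
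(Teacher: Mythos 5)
Your proposal is correct and follows the same route as the paper: replicate the variation argument of Theorem \ref{4.6}, observe that in even real dimension the trace $\Tr(\alpha_j(-\triangle_{p,q,j})^{-s})$ no longer vanishes at $s=0$ but is instead a local integral over $W$, and then show that because a flat unitary bundle is locally trivializable by a parallel unitary frame, the local heat-coefficient density $k_{p,q,j}$ and the variation operator $\alpha_j$ both factor through the rank $n$ alone and hence cancel in the difference $\frac{d}{du}(\log T_p(W,\rho_1)-\log T_p(W,\rho_2))$. This is exactly the paper's reasoning, which it states more tersely as the variation being ``an integral over $W$ whose integrand just depends on the local geometry'' so that ``in taking the difference ... the integrand cancels out''; you have simply supplied the justification for the locality and the $\rho$-independence of the integrand.
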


The reason that the theorem holds is that if $h(u)$ is a one-parameter
family of Hermitian metrics on $W$ then the
argument for Theorem \ref{4.6} shows that $\frac{d}{du} \log T_p(W,\rho_j)$
is an integral over $W$
whose integrand just depends on the local geometry.  In taking the
difference $\frac{d}{du} \left( \log T_p(W,\rho_1) -\log T_p(W,\rho_2)
\right)$ the integrand cancels out. If $N=1$ then one can remove the
assumption that $\HH^{p,q}(W, L_j) = 0$.

Hence under the assumptions of Theorem \ref{4.13}, the ratio
$T_p(W,\rho_1)/T_p(W,\rho_2)$ is an invariant of the complex manifold $W$.
Ray and Singer computed it explicitly for Riemann surfaces.  In the case of
genus one, i.e. tori, the answer was in terms of theta functions.
In the case of genus greater than one, they used the Selberg trace
formula to express the answer in terms of Selberg zeta functions.

\section{Further developments} \label{sect5}

After the first Ray-Singer paper, an outstanding problem was to show that the
analytic torsion equals the $R$-torsion.  Section \ref{sect5.1} describes the
proofs of this by Cheeger and M\"uller, along with
the subsequent proof by Bismut and Zhang.

A further understanding of the Ray-Singer torsion came from looking at {\em families}.  
Section \ref{sect5.2} explains how Quillen used the $\overline{\partial}$-torsion of the second
Ray-Singer paper in the setting of a family of $\overline{\partial}$-operators
on a complex vector bundle over a Riemann surface.
Section \ref{sect5.3} has the extension to higher dimension,
due to Bismut-Gillet-Soul\'e, along with their construction of a holomorphic
torsion form.  Section \ref{sect5.4} describes the analytic torsion form of a smooth
fiber bundle, due to Bismut and me.

\subsection{Cheeger-M\"uller theorem} \label{sect5.1}

In their first paper, Ray and Singer showed that the $R$-torsion 
$\tau_K(\rho)$ and the analytic torsion $T_W(\rho)$ have formal similarities.
Furthermore, Ray showed by explicit computation that they coincide for lens spaces
\cite{Ray (1970)}. This naturally lead to the problem of showing that
$T_W(\rho) = \tau_K(\rho)$ when
$K$ is a triangulation of $W$.  There are various proofs of this,
each being technically involved.

In order to show that $T_W(\rho)$ and $\tau_K(\rho)$ are the same, 
a first approach might be to use the
similarity between (\ref{2.8}) and (\ref{4.11}), take finer and finer triangulations
of $W$, and hope that (\ref{2.8}) approaches (\ref{4.11}).  This approach cannot
work directly since (\ref{4.11}) requires an analytic continuation beyond
singularities in the $s$-plane, whereas 
one does not see any such singularities in (\ref{2.8}).

Rather than trying to show directly that $T_W(\rho)/\tau_K(\rho) = 1$
Ray and Singer proposed to first take two representations
$\rho_1, \rho_2 : \pi_1(W, w_0) \rightarrow O(n)$ and show that
the difference
\begin{equation} \label{5.1}
\log \left( \frac{T_W(\rho_1)}{\tau_K(\rho_1)} \right) - \log 
\left( \frac{T_W(\rho_2)}{\tau_K(\rho_2)} \right)
=\log \left( \frac{T_W(\rho_1)}{T_W(\rho_2)} \right) - 
\log \left( \frac{\tau_K(\rho_1)}{\tau_K(\rho_2)} \right)
\end{equation}
vanishes.  Since the residues of the (simple) poles of the zeta function are
the integrals of local expressions on $W$, the difference
$\zeta_{q,\rho_1}(s)-\zeta_{q,\rho_2}(s)$ is analytic in $s$.
Because of this,
$\log (T_W(\rho_1)/T_W(\rho_2))$ should have better approximation properties
than $\log T_W(\rho_1)$ or $\log T_W(\rho_2)$
individually.

Ray and Singer's idea was to put a Morse function $f$ on $W$, look at
its sublevel sets $W_u = f^{-1}(-\infty, u]$, compute
the various torsions on $W_u$ (with appropriate boundary conditions)
and analyze how the
expression in (\ref{5.1}) depends on $u$.  Away
from the critical values of $f$, it should be constant in $u$. One would then
want to analyze how it changes when one passes through a critical value.
In this approach it is important to remove the acyclicity condition.

The equality of $T_W(\rho)$ and $\tau_K(\rho)$ was proven independently
by Jeff Cheeger \cite{Cheeger (1979)} and Werner M\"uller \cite{Muller (1978)}.
As in the Ray-Singer idea, the proofs involved analyzing how $T_W(\rho)/\tau_K(\rho)$ 
varies under topological change. How this was implemented differed from
what Ray and Singer had in mind.  

One common idea to the Cheeger and M\"uller papers was to use the fact that the torsions are 
defined independent of orientation, and the torsion of $W \cup W$ is twice the torsion of $W$.
As
$W \cup W$ is the boundary of $[0,1] \times W$, there is a sequence of surgeries that transform
$W \cup W$ to $S^N$.  If one can keep track of how the torsions change under surgery then
one can reduce to checking the equality on $S^N$.

M\"uller's first step was to use approximations of the differential form Laplacian by the combinatorial
Laplacian to show that (\ref{5.1}) vanishes.  Hence $T_W(\rho)/\tau_K(\rho)$
was independent of the representation and it sufficed to work with the
trivial representation $\rho$.  Next, suppose that one has a presurgery manifold $W_1$ and
a postsurgery manifold $W_2$. The surgery amounts to removing a copy of
$S^k \times D^{N-k}$ and adding a copy of $D^{k+1} \times S^{N-k-1}$. 
One can assume that the Riemannian metric is standard on those pieces. Let
$W_3$ be the double $S^k \times S^{N-k}$ of $S^k \times D^{N-k}$, and let
$W_4$ be the double $S^{k+1} \times S^{N-k-1}$ of $D^{k+1} \times S^{N-k-1}$.
In the combination $\zeta^{W_1}_{q,\rho}(s)-\zeta^{W_2}_{q,\rho}(s) 
- \frac12 \zeta^{W_3}_{q,\rho}(s) + \frac12 \zeta^{W_4}_{q,\rho}(s)$ the singularities cancelled
out, and so one obtained an analytic function of $s$.  M\"uller showed that this
combination can be approximated by the analogous expression in combinatorial
Laplacians, obtaining in the end that $T_{W_1}(\rho)/\tau_{K_1}(\rho) = 
T_{W_2}(\rho)/\tau_{K_2}(\rho)$.  This finally reduced to checking the ratio
for $W = S^N$, where it followed from Ray's calculations.

Cheeger's approach to the surgery was to keep careful track of the eigenvalues
under a conical degeneration.  Let $W_1(u)$ denote the result of 
removing $S^k \times D^{N-k}(u)$ from $W_1$, where $D^{N-k}(u)$ denotes
a disk of radius $u$.  Cheeger imposed absolute boundary conditions on
$W_1(u)$ and made a refined analysis of how the heat kernel for the
differential form Laplacian behaved as
$u \rightarrow 0$. A model space for this analysis was the product $S^k \times
A_{u,1}^{N-k}$, where $A_{u,1}^{N-k}$ denotes the annulus in $\R^{N-k}$ with outer radius one
and inner radius $u$.  With the symmetric analysis for $W_2$,
he was able to show that $T_{W_1}(\rho)/\tau_{K_1}(\rho) = 
T_{W_2}(\rho)/\tau_{K_2}(\rho)$, provided that $1 \le k \le n-2$.  
Finally, replacing $W$ by $W \times S^6$, he was
able to prove that $T_{W}(\rho)/\tau_{K}(\rho) = 1$. Besides proving the equality
of the torsions, Cheeger's methods lead to his later work on 
the spectral geometry of singular Riemannian spaces
\cite{Cheeger (1983)}.

Jean-Michel Bismut and Weiping Zhang gave an alternative proof of the Cheeger-M\"uller theorem
using a Morse function $f$ on $W$ \cite{Bismut-Zhang (1992)}. 
The idea was to do a Witten deformation, meaning that one replaces
$d$ by  $e^{-Tf} \circ d
\circ e^{Tf}$ and $\delta$ by $e^{Tf} \circ \delta
\circ e^{-Tf}$. As in the proof of Theorem \ref{4.6}, the ensuing
analytic torsion is independent of $T$. 

As $T \rightarrow \infty$, most of the eigenvalues of
$\triangle$ go to minus infinity.  The ones that stay bounded have
eigenfunctions that give
the (finite dimensional) Witten complex computing $\HH^*(W, E)$.
By means of this limit, using generic Morse functions $f$ (i.e.
$\nabla f$ satisfies the Smale transversality conditions) Bismut
and Zhang were able to prove that $T_W(\rho) = \tau_K(\rho)$
without performing surgery on $W$.

Finally, an approach using a gluing formula for the analytic torsion
was given by Simeon Vishik \cite{Vishik (1995)}.

\subsection{Determinant line bundle} \label{sect5.2}

In 1985, Daniel Quillen published a four page paper that gave a new
understanding of the $\overline{\partial}$-torsion \cite{Quillen (1985)}.
Quillen's paper had one reference,
the second Ray-Singer paper.  

As a first step, Quillen applied the definition of the $\overline{\partial}$-torsion 
not just for a flat unitary bundle, but rather for a general holomorphic
bundle equipped with a Hermitian inner product.  Since the analytic torsion
of the first Ray-Singer paper was defined using a $d$-flat vector bundle,
it is natural in the holomorphic setting to replace this by a 
$\overline{\partial}$-flat vector bundle, i.e. a holomorphic vector bundle $E$.
We write the corresponding $\overline{\partial}$-torsion as $T_W(E)$,
taking $p=0$.

Quillen considered a compact Riemann surface $W$ and a 
smooth complex vector bundle $E$ on $W$. A holomorphic structure on $E$ corresponds
to a choice of $\overline{\partial}$-operator
$\overline{\partial} : \Omega^{0,0}(W, E) \rightarrow \Omega^{0,1}(W, E)$;
the local holomorphic sections $s$ of $E$ correspond to the local solutions
of $\overline{\partial} s = 0$. 

Rather than looking at a single holomorphic structure on $E$, Quillen looked
at the family ${\mathcal A}$ of {\em all} such structures. It has a natural
complex structure.
In this setting, there is a holomorphic line bundle $Det$
on ${\mathcal A}$ called
the {\em determinant line bundle}.  
Given a holomorphic structure $a \in {\mathcal A}$ on $E$,
the fiber of $Det$ over $a$ is 
\begin{equation} \label{5.2}
Det_a = \left( \Lambda^{max} \HH^0(W, E) \right)^* \otimes 
\Lambda^{max} \HH^1(W, E),
\end{equation}
where $\Lambda^{max}$ denotes the highest
exterior power.  

Suppose that we want to put an inner product on $Det$.
Given a Riemannian metric on $W$ and a Hermitian inner product on $E$, for 
each $a \in {\mathcal A}$ the Hodge isomorphism gives
an $L^2$ inner product $\langle \cdot, \cdot \rangle_{L^2,a}$ on $Det_a$.  Unfortunately,  
this inner product need not be continuous in $a$.  The issue is that while $Det_a$
is smooth in $a$, the individual factors $\HH^0(W, E)$ and $\HH^1(W, E)$ can
abruptly jump in dimension.

Quillen's insight was that this lack of continuity can be corrected using the 
$\overline{\partial}$-torsion.  The {\em Quillen metric} on $Det_a$ is defined by
$\langle \cdot, \cdot \rangle_{Q,a} = T_W(E)^2
\langle \cdot, \cdot \rangle_{L^2,a}$. It gives rise to a {\em smooth} inner product on $Det$.

An inner product on a holomorphic line bundle induces a preferred connection on
the line bundle. One can then talk about the curvature of the connection.  In the case of the determinant line bundle,
computing the curvature essentially amounts to computing 
$\partial \overline{\partial} \log T_W(E)$,
a $2$-form on ${\mathcal A}$.
Quillen found a formula for the curvature involving the integral of a {\em local} expression on $W$,
in contrast to the nonlocal nature of $T_W(E)$,

If $\HH^0(W, E)$ and $\HH^1(W, E)$ both happen to vanish 
then the {\em determinant line} $Det_a$ can be
identified with $\C$, with a canonical element $1 \in Det_a$.  In this setting,
the Quillen norm of $1$ is $T_W(E)$. 
Although it may look as if we haven't achieved anything new,
the framework of determinant
line bundles is useful as one can sometimes use holomorphic methods to
{\em compute} $T_W(E)$ \cite{Bost-Nelson (1986)}.

\subsection{Holomorphic torsion form} \label{sect5.3}

Quillen's work was extended to higher dimensions by Jean-Michel Bismut, Henri Gillet and
Christophe Soul\'e \cite{BGS (1988)}. Furthermore, they found that the Ray-Singer torsion 
$T_W(E)$ entered into a differential form
version of the Riemann-Roch-Grothendieck (RRG) theorem.

The setup of \cite{BGS (1988)} was a family of complex structures on a compact manifold
$Z$, parametrized by a complex manifold $B$.  That is, one has a holomorphic fiber bundle
$\pi : M \rightarrow B$ whose fibers are diffeomorphic to $Z$. They also assumed that 
the fibers carry K\"ahler metrics that form a K\"ahler fibration, in the sense that the
K\"ahler forms on the fibers are the restrictions of a closed $(1,1)$-form on $M$.

Let $E$ be a holomorphic vector bundle on $M$, equipped with a Hermitian inner product $h^E$. 
There is an induced Chern
connection on $E$.
Put $Z_b = \pi^{-1}(b)$. 
The determinant line bundle $Det$ is a holomorphic line bundle on $B$ whose fiber
over $b \in B$ is 
\begin{align} \label{5.3}
Det_b = & \left( \Lambda^{max} \HH^0 \left( Z_b, E \big|_{Z_b} \right) \right)^* \otimes 
\Lambda^{max} \HH^1 \left( Z_b, E \big|_{Z_b} \right) \otimes \\
& \left( \Lambda^{max} 
\HH^2 \left( Z_b, E \big|_{Z_b} \right)
\right)^* \otimes 
\Lambda^{max} \HH^3 \left( Z_b, E \big|_{Z_b} \right) \otimes \ldots  \notag
\end{align}
If $\langle \cdot, \cdot \rangle_{L^2,b}$ denotes the $L^2$-inner product on
$Det_b$ then 
the {Quillen metric} $\langle \cdot, \cdot \rangle_{Q}$ on $Det$ is defined by
$\langle \cdot, \cdot \rangle_{Q,b} = T\left( Z_b, E \big|_{Z_b} \right)^2
\langle \cdot, \cdot \rangle_{L^2,b}$.

Put $TZ = \Ker(d\pi)$, a holomorphic vector bundle on $M$.  The K\"ahler fibration gives a connection on $TZ$.

\begin{theorem} \label{5.4} \cite{BGS (1988)} The curvature $2$-form associated to the Quillen metric is
$2 \pi i$ times the $2$-form component of $\int_Z \Td(TZ, g^{TZ}) \wedge \ch(E, h^E)$.
\end{theorem}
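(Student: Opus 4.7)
The plan is to realize the Quillen metric as the object for which Bismut's local families index theorem produces a smooth de Rham representative of the Chern character of the index bundle, and then extract the $(1,1)$-component. Let $\mathcal{H}\to B$ denote the infinite-dimensional $\Z_2$-graded smooth bundle whose fiber over $b$ is $\bigoplus_q \Omega^{0,q}(Z_b,E|_{Z_b})$. The K\"ahler fibration assumption produces a canonical horizontal distribution on $\pi:M\to B$ and hence a unitary connection $\nabla^{\mathcal{H}}$. The Bismut superconnection
\begin{equation*}
\mathbb{B}_t = \nabla^{\mathcal{H}} + \sqrt{t}\,\bigl(\overline{\partial}^Z+\overline{\partial}^{Z,*}\bigr) - \tfrac{1}{2\sqrt{t}}\,c(T),
\end{equation*}
with $c(T)$ the Clifford action of the curvature of the horizontal distribution, has the property that $\Tr_s[\exp(-\mathbb{B}_t^2)]$ is a closed even form on $B$ representing $\ch(\Ind(\overline{\partial}^Z))$ for every $t>0$.

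First I would compute the curvature of the $L^2$ metric on $\mathrm{Det}$ on the stratum where the fiberwise Dolbeault cohomology has locally constant rank; Kodaira--Spencer theory identifies the $(1,1)$-component with the $L^2$-supertrace of the curvature of $\nabla^{\mathcal{H}}$ restricted to harmonic forms. Next, using the Mellin representation
\begin{equation*}
\zeta_{0,q,\rho}(s) = \frac{1}{\Gamma(s)}\int_0^\infty t^{s-1}\,\Tr\bigl[e^{t\triangle_{0,q}} - P_{\Ker(\triangle_{0,q})}\bigr]\,dt,
\end{equation*}
I would rewrite $\log T(Z_b,E|_{Z_b})$ as a regularized integral of a number-operator weighted heat trace. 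Applying $\partial\overline{\partial}$ to a local holomorphic frame of $\mathrm{Det}$ and commuting it with the Mellin transform reduces the curvature calculation to analyzing $\Tr_s[N_V\,e^{-\mathbb{B}_t^2}]$ for all $t > 0$, with $N_V$ the vertical number operator.

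The main step is the transgression identity
\begin{equation*}
\frac{\partial}{\partial t}\Tr_s\bigl[e^{-\mathbb{B}_t^2}\bigr] = -\,d\,\Tr_s\Bigl[\tfrac{\partial \mathbb{B}_t}{\partial t}\,e^{-\mathbb{B}_t^2}\Bigr],
\end{equation*}
together with its variant for the insertion of $N_V$. The large-$t$ limit of the supertrace recovers the $L^2$-metric contribution coming from the harmonic representatives, while the small-$t$ limit is supplied by Bismut's local families index theorem: after Getzler rescaling adapted to the superconnection, one obtains $\lim_{t\to 0^+}\Tr_s[\exp(-\mathbb{B}_t^2)] = \int_Z \Td(TZ,g^{TZ})\wedge \ch(E,h^E)$ pointwise on $B$. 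Integrating the transgression from $t=\infty$ down to $t=0$ exhibits the Quillen correction $T(Z_b,E|_{Z_b})^2$ as precisely the $\partial\overline{\partial}$-exact term required to reconcile the $L^2$-curvature with the local index density, so that the $(1,1)$-component of the Chern curvature of $(\mathrm{Det},\|\cdot\|_Q)$ equals $2\pi i$ times the degree-$2$ part of $\int_Z \Td(TZ,g^{TZ})\wedge \ch(E,h^E)$.

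The principal obstacle is the small-$t$ asymptotic analysis of $\Tr_s[\exp(-\mathbb{B}_t^2)]$: one must control the horizontal and mixed terms in $\mathbb{B}_t^2$ and show, via a Getzler-style rescaling of Clifford variables on $TZ$ together with horizontal $1$-forms, that $\mathbb{B}_t^2$ degenerates to a generalized harmonic oscillator whose heat kernel reproduces $\Td(TZ,g^{TZ})\wedge \ch(E,h^E)$ in the Dolbeault setting. A secondary difficulty, and indeed the very raison d'\^etre of the Quillen metric, is the possible jumping of $\dim \HH^{0,q}(Z_b,E|_{Z_b})$ across strata of $B$: one must verify that the $\partial\overline{\partial}$ of the logarithm of the zeta-regularized determinant smooths out these jumps, so that the Quillen metric and its Chern curvature extend smoothly across all of $B$.
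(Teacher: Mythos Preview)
The paper does not contain a proof of this theorem; it is stated with a citation to \cite{BGS (1988)} and followed only by explanatory remarks on notation and context. There is therefore nothing in the paper to compare your argument against.

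That said, your proposal is a faithful outline of the Bismut--Gillet--Soul\'e strategy from the cited reference: assemble the fiberwise Dolbeault complexes into an infinite-dimensional bundle with the Bismut superconnection, use the transgression of $\Tr_s[\exp(-\mathbb{B}_t^2)]$ to interpolate between the $t\to 0$ local index density $\int_Z \Td(TZ,g^{TZ})\wedge\ch(E,h^E)$ and the $t\to\infty$ limit on harmonic forms, and identify the zeta-regularized correction with $\partial\overline{\partial}\log T(Z_b,E|_{Z_b})$. You have also correctly flagged the two genuine analytic difficulties (the Getzler-rescaling step producing the Todd form in the K\"ahler setting, and smoothness of the Quillen metric across strata where the fiberwise cohomology jumps). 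As a proof \emph{proposal} this is on target; turning it into a proof requires carrying out precisely the analysis in \cite{BGS (1988)}.
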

Here  
$\int_Z$ is integration over the fiber, 
$\Td$ is the Todd form and
$\ch$ is the Chern character form. The normalization is such that
$\Td$ and $\ch$ represent rational cohomology classes.
The validity
of the theorem was previously known on the level of cohomology.
The point is that an
explicit $2$-form representative arises geometrically
as the curvature of $Det$, when the latter is equipped with the Quillen metric.

On the level of cohomology, the expression $\int_Z \Td(TZ) \: \cup \: \ch(E)$ is the
right-hand side of the RRG theorem. To state the theorem, let us
make the additional assumption that for each $q$, the dimension of 
$\HH^q \left( Z_b, E \big|_{Z_b} \right)$ is constant in $b$. Then the
vector spaces $\left\{ \HH^q \left( Z_b, E \big|_{Z_b} \right) \right\}_{b \in B}$ fit together
to form a holomorphic
vector bundle $\underline{\HH}^q$ on $B$. In this setting the RRG
theorem says that 
\begin{equation} \label{5.5}
\sum_{q=0}^{\dim_\C (Z)} (-1)^q \ch(\underline{\HH}^q) = \int_Z \Td(TZ) \cup \ch(E),
\end{equation}
in $\HH^{even}(B; \R)$.

The vector bundle $\underline{\HH}^q$ acquires an $L^2$-inner product 
$h^{\underline{\HH}^q}$ and
corresponding connection. One can ask whether (\ref{5.5}) becomes an equality of
differential forms.  This is not the case, but the discrepancy can be
described using the {\em holomorphic torsion form}.

\begin{theorem} \label{5.6} \cite{BGS (1988)} There is a canonical form
${\mathcal T} \in \Omega^{even}(B)$, depending on the above geometric data, so that
\begin{equation} \label{5.7}
\sqrt{-1} \partial \overline{\partial} {\mathcal T} =
\int_Z \Td \left( TZ, \nabla^{TZ} \right) \wedge \ch(E, h^E) -
\sum_{q=0}^{\dim_\C (Z)} (-1)^q \ch \left( \underline{\HH}^q, h^{\underline{\HH}^q} \right)
\end{equation}
in $\Omega^{even}(B)$.  The degree-zero component ${\mathcal T}_{[0]} \in C^\infty(B)$
of ${\mathcal T}$ is related to the $\overline{\partial}$-torsion by
${\mathcal T}_{[0]}(b) = \frac{1}{\pi} \log T\left( Z_b, E \big|_{Z_b} \right)$.
\end{theorem}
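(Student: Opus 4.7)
The plan is to construct $\mathcal{T}$ as a zeta-regularized supertrace built from the Bismut superconnection of the K\"ahler fibration, and to derive (\ref{5.7}) from a $\partial\overline{\partial}$-transgression formula.

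First I would set up the Bismut superconnection $\mathbb{B}_t$ acting on $\Omega^{0,\bullet}(Z, E|_Z)$, viewed as an infinite-dimensional $\Z_2$-graded bundle over $B$. Its degree-zero component is $\sqrt{t}(\overline{\partial}^Z + (\overline{\partial}^Z)^*)$, its degree-one component is the horizontal connection induced by the K\"ahler fibration together with the Chern connection of $(E,h^E)$, and its degree-two component encodes the curvature of the fibration (torsion of the horizontal distribution). Let $N_V$ be the fiberwise number operator counting $(0,q)$-degree. The two basic inputs from local families index theory are: (a) as $t \to 0$, the Chern--Weil type expansion
\begin{equation}
\phi\, \Tr_s\!\left[ e^{-\mathbb{B}_t^2}\right] \longrightarrow \int_Z \Td(TZ, \nabla^{TZ}) \wedge \ch(E, h^E),
\end{equation}
with an analogous asymptotic expansion for $\phi\, \Tr_s[N_V e^{-\mathbb{B}_t^2}]$ in half-integer powers of $t$; (b) as $t \to \infty$, projection onto fiberwise harmonic forms yields $\sum_q (-1)^q \ch(\underline{\HH}^q, h^{\underline{\HH}^q})$, and $\phi\,\Tr_s[N_V e^{-\mathbb{B}_t^2}]$ converges to $\sum_q (-1)^q q\, \ch(\underline{\HH}^q, h^{\underline{\HH}^q})$.

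Next I would define the torsion form by a Mellin transform. Set
\begin{equation}
\zeta(s) = -\frac{1}{\Gamma(s)} \int_0^\infty t^{s-1} \phi\,\Tr_s\!\left[ N_V \left( e^{-\mathbb{B}_t^2} - P_\infty\right)\right] dt,
\end{equation}
where $P_\infty$ denotes the limiting projection as $t \to \infty$ and the integrand is interpreted as a form on $B$. The small-$t$ asymptotic expansion shows that $\zeta(s)$, after removing the pole contributions, extends meromorphically and is regular at $s=0$; the large-$t$ behavior controls convergence at infinity. Define
\begin{equation}
\mathcal{T} = \zeta'(0).
\end{equation}
The degree-zero component is precisely the classical Ray--Singer expression $\frac{1}{\pi}\log T(Z_b, E|_{Z_b})$ by (\ref{4.12}), once one tracks the $2\pi$-normalization.

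Then I would derive (\ref{5.7}) via a transgression. The crucial identity, special to the K\"ahler case, takes the form
\begin{equation}
\frac{\partial}{\partial t} \phi\,\Tr_s\!\left[ e^{-\mathbb{B}_t^2}\right] = -\frac{\overline{\partial}\partial}{2\pi i t} \phi\,\Tr_s\!\left[ N_V e^{-\mathbb{B}_t^2}\right],
\end{equation}
which upgrades the usual $d$-exactness of the transgression to $\partial\overline{\partial}$-exactness because the K\"ahler structure splits the vertical exterior algebra into holomorphic and antiholomorphic parts. Integrating from $t=0$ to $t=\infty$ against $\frac{1}{\Gamma(s)} t^{s-1}$, then differentiating at $s=0$, turns the boundary contributions into the difference of the local Riemann--Roch integrand and the Chern character of the cohomology, while the interior integral becomes $\sqrt{-1}\,\partial\overline{\partial}\mathcal{T}$.

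The main obstacle is proving the sharp small-$t$ asymptotic expansion of $\phi\,\Tr_s[N_V e^{-\mathbb{B}_t^2}]$ with identified leading term, and justifying the $\partial\overline{\partial}$-transgression formula: one must rescale the Clifford variables along each fiber (Getzler rescaling adapted to the $\overline{\partial}$-Laplacian) and carefully exploit the K\"ahler identities fiberwise to separate the holomorphic and antiholomorphic differentials on $B$. Once these analytic inputs are in place, the rest is a Mellin-transform manipulation, and the formula for $\mathcal{T}_{[0]}$ follows by comparison with (\ref{4.9}) and (\ref{4.12}) fiber by fiber.
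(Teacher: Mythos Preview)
The paper does not prove Theorem~\ref{5.6}; it merely states the result and attributes it to \cite{BGS (1988)}. There is therefore no ``paper's own proof'' against which to compare your proposal.

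That said, your sketch is a faithful outline of the Bismut--Gillet--Soul\'e argument: the Bismut superconnection on the infinite-dimensional bundle of fiberwise Dolbeault forms, the small-$t$ local index limit and large-$t$ harmonic projection, the definition of $\mathcal{T}$ via a Mellin transform of $\Tr_s[N_V e^{-\mathbb{B}_t^2}]$, and the $\partial\overline{\partial}$-transgression identity that upgrades the usual $d$-exact Chern--Simons transgression to a $\partial\overline{\partial}$-exact one in the K\"ahler setting. The points you flag as the main analytic work (the Getzler-type rescaling for the small-$t$ expansion and the use of the K\"ahler identities to obtain the $\partial\overline{\partial}$ rather than merely $d$) are indeed where the substance lies in \cite{BGS (1988)}. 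Your proposal is a correct roadmap, but there is simply nothing in this survey to compare it with.
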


Because of Theorem \ref{5.6}, the form ${\mathcal T}$ can be called the {\em holomorphic
torsion form}.  It can be considered to be a {\em transgression} of the RRG theorem,
on the level of differential forms. 

As a remark, the formalism of determinant line bundles and Quillen metrics extends to
smooth families of Dirac-type operators 
\cite[Chapters 9.7 and 10.6]{Berline-Getzler-Vergne (2004)},
\cite{Bismut-Freed (1986),Bismut-Freed (1987)}.  The definition of the Quillen metric
again involves  a product of
regularized determinants, although in general it cannot be identified
with the Ray-Singer torsion.

\subsection{Analytic torsion form} \label{sect5.4}

As described in the previous section, the
$\overline{\partial}$-torsion is the
$0$-form component of a torsion form that
represents a transgression of the RRG
formula.  
To come full circle, one can ask if there's
a similar interpretation of the original
Ray-Singer torsion.  It turns out that there is, as was shown by
Bismut and me.

In order to see this interpretation, it was necessary to extend the
definition of the Ray-Singer torsion $T_W(\rho)$ beyond the case of
flat orthogonal or unitary vector bundles.  Let $E$ be a flat complex vector bundle
on $W$.  Suppose that $E$ is equipped with a Hermitian inner product
$h^E$, not necessarily covariantly constant.  Then one can still use
the formula (\ref{4.5}) to define the Ray-Singer torsion $T_W(E)$.

Considering the role that volume forms play in the $R$-torsion, a
natural extension of the Ray-Singer work was to assume that $E$ has
unimodular holonomy, i.e.
taking values in $\{ A \in \GL(n, \C) \: : \:
|\det A| = 1\}$. In this case M\"uller proved the extension of the
Cheeger-M\"uller theorem \cite{Muller (1993)}. This had later application
to the growth of torsion in the cohomology of locally symmetric
spaces, as described in \cite{Muller (2022)}.

Going beyond this, Bismut and Zhang considered arbitrary flat complex
vector bundles $E$ \cite{Bismut-Zhang (1992)}.  The topological 
meaning of $T_W(E)$ was not so clear in this case but Bismut and
Zhang proved ``anomaly'' formulas showing how $T_W(E)$ depends on $g_W$ and $h^E$.

Using the analogy that $\overline{\partial}$-flat vector bundles, i.e. holomorphic bundles, are like
$d$-flat vector bundles, i.e. flat vector bundles,
the first question was whether there is a
analog of the RRG theorem for flat vector bundles. It turns out that there is.
To state it, let us define certain characteristic classes
of {\em flat} vector
bundles.  

Let $W$ be a smooth manifold and
let $E$ be a flat complex vector bundle on $W$. Let $h^E$ be a Hermitian inner product on 
$E$ (not necesarily covariantly constant).  With respect to a local covariantly 
constant basis of $E$, we can think of $h^E$ locally as a Hermitian matrix valued
function on $B$.  Put $\omega(E, h^E) = (h^E)^{-1} dh^E$, a globally defined
$\End(E)$-valued $1$-form on $B$. 

If $k$ is a positive odd integer, put 
\begin{equation} \label{5.8}
c_k(E, h^E) = (2 \pi i)^{- \frac{k-1}{2}} 2^{-k}
\tr \left( (\omega(E, h^E))^k \right).
\end{equation}
Then $c_k(E, h^E)$ is closed and its de Rham cohomology class
$c_k(E)$ is independent of $h^E$.  To give some idea of what
$c_k(E)$ measures, it vanishes if $E$ has unitary holonomy.
And $c_1(E)$ vanishes if the holonomy is unimodular.

Now let $\pi : M \rightarrow B$ be a fiber bundle with
closed fibers $Z_b = \pi^{-1}(b)$. Let $E$ be a flat complex
vector bundle on $M$ and let $\underline{\HH}^q$ be the flat
complex vector bundle on $B$ whose fiber over $b \in B$ is
$\HH^q \left( Z_b, E \Big|_{Z_b} \right)$. Let $TZ = \Ker(d\pi)$
be the vertical tangent bundle, a vector bundle on $M$, and let
$o(TZ)$ be its orientation bundle, a flat $\R$-bundle on $M$.
Let $e(TZ) \in \HH^{\dim(Z)}(M; o(TZ))$ be the Euler class of $TZ$.
The following is an analog of the RRG theorem, for flat vector
bundles.

\begin{theorem} \label{5.9} \cite{Bismut-Lott (1995)}
For any positive odd number $k$,
\begin{equation} \label{5.10}
\sum_{q=0}^{\dim (Z)} (-1)^q c_k(\underline{\HH}^q) = \int_Z e(TZ) \cup c_k(E)
\end{equation}
in $\HH^k(B; \R)$.
\end{theorem}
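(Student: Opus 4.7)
The plan is to build, on an infinite-rank vector bundle over $B$, a one-parameter family of closed $k$-forms whose de Rham class is independent of the parameter, and which at the two extreme parameter values reproduces the two sides of (\ref{5.10}). This is the $d$-flat counterpart of the superconnection construction that, in the $\overline{\partial}$-flat setting, produces Theorem \ref{5.6}. Let $\mathcal{E}\to B$ be the $\Z$-graded infinite-rank bundle whose fiber at $b$ is $\Omega^*(Z_b, E|_{Z_b})$. The fiberwise exterior derivative $d^Z$ twisted by the flat connection on $E$, together with a horizontal distribution on $TM$ and the curvature $2$-form of this distribution, assembles (following Bismut) into a flat superconnection $A$ on $\mathcal{E}$ of total degree $1$. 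The vertical Riemannian metric $g^{TZ}$ and $h^E$ induce an $L^2$ Hermitian metric $h^{\mathcal{E}}$, with respect to which one forms the adjoint superconnection $A^*$. For $t>0$, rescale by multiplying the degree-$(p,1-p)$ component of $A$ by $t^{(1-p)/2}$ and correspondingly for $A^*$; this yields $A_t$, $A_t^*$, and $X_t=A_t^*-A_t$, an $\End(\mathcal{E})$-valued odd form on $B$ that generalizes $\omega(E,h^E)$ to the family.

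\textbf{Closed forms and two limits.} For odd $k\ge 1$, define
\begin{equation}
f_k(t) \;=\; (2\pi i)^{-(k-1)/2}\,2^{-k}\,\Tr_s\!\left[X_t^{\,k}\,e^{-\tfrac{1}{2}(A_t+A_t^*)^2}\right]\in\Omega^k(B).
\end{equation}
Standard superconnection calculus (Duhamel's formula and the vanishing of supertraces of supercommutators) shows that $f_k(t)$ is closed and that $\tfrac{d}{dt}f_k(t)$ is exact, so the class $[f_k(t)]\in\HH^k(B;\R)$ is independent of $t$. As $t\to\infty$, the Gaussian factor projects onto $\Ker(d^Z+(d^Z)^*)$, which by fiberwise Hodge theory is the finite-rank bundle $\underline{\HH}^*$; on this subbundle $X_t$ restricts to $\omega\bigl(\underline{\HH}^*,h^{\underline{\HH}^*}\bigr)$, yielding $\lim_{t\to\infty}f_k(t)=\sum_{q=0}^{\dim(Z)}(-1)^q c_k\bigl(\underline{\HH}^q, h^{\underline{\HH}^q}\bigr)$. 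As $t\to 0$, a Getzler-type local index computation shows that the pointwise supertrace concentrates on the fiberwise diagonal: the vertical geometric part assembles into the Pfaffian of $R^{TZ}/(2\pi)$, i.e.\ $e(TZ,\nabla^{TZ})$, while the coefficient part built from the flat connection on $E$ assembles into $c_k(E,h^E)$; integrating over the fiber gives $\lim_{t\to 0}f_k(t)=\int_Z e(TZ,\nabla^{TZ})\wedge c_k(E,h^E)$. Equating the two cohomology classes yields (\ref{5.10}).

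\textbf{Main obstacle.} The principal difficulty is the $t\to 0$ local index computation. Unlike the classical Atiyah--Singer situation, $X_t$ carries genuine contributions from the flat connection $\omega(E,h^E)$ on $E$, and one must verify that under Getzler rescaling these separate cleanly from the vertical Clifford contributions so that the coefficient factor produces precisely $c_k(E,h^E)$ against the Euler integrand, with all mixed subleading terms either vanishing in the limit or being exact on $B$. Accompanying technical hurdles are the uniform heat-kernel estimates needed to justify the interchanges of limits in $t$, and the control of the transgression boundary term at $t=0$ that, in this $d$-flat setting, will simultaneously produce the analytic torsion form of Section \ref{sect5.4}.
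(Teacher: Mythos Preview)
The survey paper you are working from does not actually prove Theorem \ref{5.9}; it is quoted as a result of \cite{Bismut-Lott (1995)} and followed immediately by the statement of Theorem \ref{5.11}, with no argument given for either. So there is no ``paper's own proof'' to compare against here.

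That said, your proposal is a faithful outline of the strategy in the original Bismut--Lott paper: build a flat superconnection on the infinite-dimensional bundle $\mathcal{E}$ of fiberwise $E$-valued forms, form the odd endomorphism $X_t$ from the adjoint pair, produce a family of closed forms whose class is $t$-independent, and read off the two sides of (\ref{5.10}) from the $t\to 0$ local index limit and the $t\to\infty$ Hodge-theoretic limit. Your identification of the $t\to 0$ step as the crux is also accurate.

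One technical point worth tightening: your specific choice $\Tr_s\!\bigl[X_t^{\,k}\,e^{-\frac12(A_t+A_t^*)^2}\bigr]$ is not quite how Bismut and Lott organize things. They work with a single odd holomorphic function (essentially $f(a)=a\,e^{a^2}$), form $\Tr_s\bigl[f(D_t)\bigr]$ with $D_t=\tfrac12(A_t^*-A_t)$, and then extract the degree-$k$ component on $B$; the relation $C_t^2=-D_t^2$ (from flatness of $A$ and of $A^*$) makes this a function of $D_t$ alone. Your $X_t^{\,k}$ times a Gaussian is an odd function of $D_t$ as well, so the mechanism still works, but you should check that your normalization really returns $c_k$ on both ends and that the mixed-degree pieces of $X_t^{\,k}$ do not spoil the bookkeeping. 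Using the generating-function formulation avoids having to track each $k$ separately and makes the Getzler rescaling cleaner.
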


To explain where the torsion comes in,
equip the fiber bundle with a horizontal
distribution $T^HM$ and a vertical
Riemannian metric $g^{TZ}$. Also equip
$E$ with a Hermitian inner product.
Then the vector bundle $\underline{\HH}^q$
acquires an $L^2$-inner product $h^{\underline{\HH}^q}$. 
The next result states the existence of
``higher'' analytic torsion forms that realize
(\ref{5.10}) on the level of differential forms.

\begin{theorem} \label{5.11} \cite{Bismut-Lott (1995)}
For any positive odd number $k$,
there
is an explicit $(k-1)$-form
${\mathcal T}_{k-1}$, depending on the
geometric data, so that 
\begin{equation} \label{5.12}
d{\mathcal T}_{k-1} = \int_Z e \left( TZ, \nabla^{TZ} \right) \wedge c_k(E, h^E) - \sum_{q=0}^{\dim (Z)} (-1)^q c_k \left( \underline{\HH}^q, h^{\underline{\HH}^q} \right)
\end{equation}
in $\Omega^k(B; \R)$. When $k = 1$, the function ${\mathcal T}_0 \in C^\infty(B)$ is such that
${\mathcal T}_0(b)$ is the negative of the logarithm of the Ray-Singer torsion
$T_{Z_b}( E \Big|_{Z_b})$. 
\end{theorem}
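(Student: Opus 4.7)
My strategy is to construct $\mathcal{T}_{k-1}$ as a transgression of characteristic forms built from a Bismut-type superconnection on an infinite-rank bundle over $B$. Let $\mathcal{E} \to B$ denote the $\Z$-graded bundle whose fiber over $b$ is $\mathcal{E}_b = \Omega^*(Z_b, E|_{Z_b})$. The flatness of $E$, together with parallel transport along the horizontal distribution $T^HM$, induces a flat connection $\nabla^{\mathcal{E}}$ on $\mathcal{E}$; combining this with the fiberwise exterior derivative $d^Z$ and a contraction by the curvature of $T^HM$ yields a flat superconnection $A'$ on $\mathcal{E}$ with $(A')^2 = 0$. The $L^2$-metric $h^{\mathcal{E}}$ coming from $g^{TZ}$ and $h^E$ gives an adjoint superconnection $A''$, and I consider the symmetric combination $C = \tfrac{1}{2}(A' + A'')$ and the Hermitian variation $\omega^{\mathcal{E}} = (h^{\mathcal{E}})^{-1} \nabla^{\mathcal{E}} h^{\mathcal{E}}$, together with their standard rescalings $C_t, \omega^{\mathcal{E}}_t$ for $t > 0$.

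For each positive odd $k$, assemble the $k$-form
\begin{equation*}
\alpha^{(k)}_t \;=\; \varphi_k \cdot \Tr_s\!\left[\, h_k(\omega^{\mathcal{E}}_t, C_t)\,\right] \;\in\; \Omega^k(B),
\end{equation*}
where $(\varphi_k, h_k)$ is the polynomial supertrace expression whose finite-dimensional analogue returns the representative (\ref{5.8}) of $c_k$ for a flat superconnection. Standard identities in superconnection calculus then give $d\alpha^{(k)}_t = 0$ and $\partial_t \alpha^{(k)}_t = d\beta^{(k)}_t$ for an explicit $(k-1)$-form $\beta^{(k)}_t$. Two asymptotic computations complete the argument. \emph{Large $t$:} the fiberwise Hodge theorem makes $C_t^2$ localize onto the harmonic subspace $\Ker(d^Z + (d^Z)^*) \cong \underline{\HH}^*$ and $\omega^{\mathcal{E}}_t$ limits to $\omega(\underline{\HH}^*, h^{\underline{\HH}^*})$, producing $\lim_{t \to \infty} \alpha^{(k)}_t = \sum_q (-1)^q c_k(\underline{\HH}^q, h^{\underline{\HH}^q})$. \emph{Small $t$:} a Getzler-type fiberwise rescaling, adapted to the possibly non-unitary holonomy of $E$, extracts the local-index contribution $\lim_{t \to 0} \alpha^{(k)}_t = \int_Z e(TZ, \nabla^{TZ}) \wedge c_k(E, h^E)$.

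Setting $\mathcal{T}_{k-1} = - \int_0^\infty \beta^{(k)}_t \, dt$, with integrability at both endpoints following from refined heat-kernel bounds on $\beta^{(k)}_t$, the fundamental theorem of calculus delivers $d\mathcal{T}_{k-1} = \lim_{t \to 0} \alpha^{(k)}_t - \lim_{t \to \infty} \alpha^{(k)}_t$, which is (\ref{5.12}). For $k = 1$ the construction collapses to a scalar: only the degree-zero component survives, and the integrand reduces via a Mellin transform to $-\tfrac{1}{2}\sum_q (-1)^q q \, \zeta'_{q,\rho}(0)$, identifying $\mathcal{T}_0(b)$ with $-\log T_{Z_b}(E|_{Z_b})$ by Definition \ref{4.4}.

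I expect the principal obstacle to be the small-$t$ asymptotic. For a unitary flat bundle this would parallel Bismut's local family-index theorem, but in the general flat setting the superconnection has a genuinely non-self-adjoint part arising from the non-covariant-constancy of $h^E$; controlling the short-time behavior of the associated heat kernel, and showing that the extra contributions either vanish or reassemble into $c_k(E, h^E)$ rather than spoiling the limit, is the technical heart of the argument.
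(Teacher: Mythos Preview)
The paper does not prove Theorem~\ref{5.11}; it is an expository survey, and the result is quoted with a citation to \cite{Bismut-Lott (1995)}. There is thus no proof in the paper to compare against.

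Your outline is a faithful sketch of the strategy in the cited reference: the $\Z$-graded bundle $\mathcal{E}$ of fiberwise forms, the flat superconnection $A'$, its metric adjoint $A''$, the rescaled family, the closed forms $\alpha_t^{(k)}$ whose $t$-derivative is exact, and the identification of the $t\to 0$ and $t\to\infty$ limits via local index theory and fiberwise Hodge theory respectively. You have also correctly located the technical core in the small-$t$ asymptotic for a non-self-adjoint superconnection.

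One point is a genuine gap rather than a detail. You assert that $\int_0^\infty \beta_t^{(k)}\,dt$ converges by ``refined heat-kernel bounds.'' At $t\to\infty$ this is false whenever $\underline{\HH}^*\neq 0$: the harmonic projection survives and $\beta_t^{(k)}$ does not decay; it carries a $t^{-1}$ tail coming from the finite-dimensional piece. The Bismut--Lott torsion form is not the raw integral of $\beta_t$; one must subtract the large-$t$ asymptote inside the integral and add back an explicit finite-dimensional counterterm built from $(\underline{\HH}^*, h^{\underline{\HH}^*})$. An analogous subtraction is needed near $t=0$. This renormalization is essential both for the integral to exist and for the identification of $\mathcal{T}_0$ with the Ray--Singer torsion, which is itself defined by zeta-regularization rather than by a convergent integral. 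With that correction made, the remainder of your outline matches the structure of the original proof.
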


When $k=1$, equation (\ref{5.12}) recovers the anomaly formula of Bismut and Zhang.

If $\dim(Z)$ is odd and $\underline{\HH}^q = 0$
for all $q$ then (\ref{5.12}) implies that
${\mathcal T}_{k-1}$ is closed, and hence
has a de Rham representative
$[{\mathcal T}_{k-1}] \in \HH^{k-1}(B, \R)$.
It turns out that $[{\mathcal T}_{k-1}]$
is independent of the choices of
$T^H M$, $g^TZ$ and $h^E$, i.e. just depends
on the smooth  fiber bundle $\pi : M \rightarrow B$ and the flat complex vector
bundle $E \rightarrow M$. On the other hand, there are ``higher'' versions of
the $R$-torsion, which are also invariants of smooth fiber bundles
\cite{Dwyer-Weiss-Williams (2003),Igusa (2002)}. The question then arises if there is a higher Cheeger-M\"uller
theorem.  The latest on this is
\cite{Puchol-Zhang-Zhu (2021)}.

\end{document}